\algrenewcommand\algorithmicrequire{\textbf{Input:}}
\algrenewcommand\algorithmicensure{\textbf{Output:}}
\crefname{equation}{}{}
\crefname{figure}{figure}{figures}
\Crefname{figure}{Figure}{Figures}
\numberwithin{equation}{section}
\numberwithin{figure}{section}
\theoremstyle{plain}
\newtheorem{theorem}{Theorem}[section]
\newtheorem{lemma}[theorem]{Lemma}
\newtheorem{corollary}[theorem]{Corollary}
\newtheorem{proposition}[theorem]{Proposition}
\newtheorem{conjecture}{Conjecture}[section]
\theoremstyle{remark}
\newtheorem{remark}{Remark}[section]
\DeclareMathOperator{\rank}{rank}
\renewcommand{\vec}[1]{\mathbf{#1}}
\newcommand{\abs}[1]{\left\vert #1 \right\vert}
\newcommand{\riesz}[1]{\mathscr{L}_{#1}}
\newcommand{\mommat}{\mathbb{M}}
\newcommand{\proj}[1]{P_{#1}}
\newcommand{\SparseExponents}[1]{\mathcal{E}_{#1}}
\newcommand{\smon}[1]{\left\llbracket #1 \right\rrbracket^{\rm sp}}
\newcommand{\dmon}[1]{\left\llbracket #1 \right\rrbracket}
\DeclareMathOperator{\supp}{supp}
\newcommand{\pushfwd}{\sharp}
\let\theparentequation\theequation
\patchcmd{\theparentequation}{equation}{parentequation}{}{}
\renewenvironment{subequations}[1][]{
  \refstepcounter{equation}%
  \setcounter{parentequation}{\value{equation}}
  \setcounter{equation}{0}
  \def\theequation{\theparentequation\alph{equation}}%
  \let\parentlabel\label
  \ifx\\#1\\\relax\else\label{#1}\fi
  \ignorespaces
}{%
  \setcounter{equation}{\value{parentequation}}
  \ignorespacesafterend
}
\newcommand*{\nextParentEquation}[1][]{
  \refstepcounter{parentequation}
  \setcounter{equation}{0}
  \ifx\\#1\\\relax\else\parentlabel{#1}\fi
}
\crefname{parentequation}{}{}
\DeclareRobustCommand{\shortto}{%
  \mathrel{\mathpalette\short@to\relax}%
}
\newcommand{\short@to}[2]{%
  \mkern2mu
  \clipbox{{.4\width} 0 0 0}{$\m@th#1\vphantom{+}{\rightarrow}$}%
  }
\title[Finite convergence in sparse moment relaxations]{Finite convergence and minimizer extraction in moment relaxations with correlative sparsity}
\author{Giovanni Fantuzzi$^1$}
\address{\normalfont$^1$Deptartment of Mathematics, Friedrich--Alexander Universit\"at Erlangen--N\"urnberg}
\email{\href{mailto:giovanni.fantuzzi@fau.de}{giovanni.fantuzzi@fau.de}}
\author{Federico Fuentes$^2$}
\address{\normalfont$^2$Institute for Mathematical and Computational Engineering (IMC), School of Engineering and Faculty of Mathematics, Pontificia Universidad Cat\'olica de Chile}
\email{\href{mailto:federico.fuentes@uc.cl}{federico.fuentes@uc.cl}}
\date{\today}
\begin{document}
\begin{abstract}
We identify a new sufficient condition for the finite convergence of moment relaxations of polynomial optimization problems with correlative sparsity. This condition, which follows from a solution to a correlatively sparse version of the classical truncated moment problem, requires that certain moment matrices admit a flat extension and that the variable cliques underpinning the relaxation satisfy a `running intersection' property. We also describe an algorithm that, when these conditions are met, extracts at least as many minimizers for the original polynomial optimization problem as the largest rank of the moment matrices in its relaxation. Our results, along with the necessity of the running intersection property, are illustrated with examples.
\end{abstract}

\maketitle

\vspace*{-5pt}
\section{Introduction}

Moment-sum-of-squares (moment-SOS) relaxations are an established approach to bound from below the global minimum of a polynomial optimization problem (POP) by solving semidefinite programs \cite{Lasserre2001,Parrilo2003,Laurent2009}. 
Raising the relaxation order to improve the lower bound rapidly increases the size of these semidefinite programs, resulting in high or outright prohibitive computational costs and memory footprint. 
To avoid this, one can modify standard relaxations to exploit properties of the POP such as symmetries \cite{Gatermann2004,Lofberg2009,Riener2013,BlekhermanRiener2020} and sparsity \cite{Waki2006,Lasserre2006,Wang2019,Wang2020a,Wang2020b,Wang2020c,Zheng2021,MagronWang2023}.
A key question is whether these modified relaxations enjoy the same good theoretical properties as the standard ones. 
In particular, one would like to have asymptotic convergence results for the lower bounds, sufficient conditions to detect finite convergence, and algorithms to extract one or more POP minimizers when these conditions hold. 

In this work, we identify new sufficient conditions to detect the finite convergence of moment relaxations of POPs with \emph{correlative sparsity}~\cite{Waki2006,Lasserre2006}. We briefly review these relaxations next, and give a precise account of our contributions in \cref{ss:intro-results}. The relation between our results and existing ones is explained in \cref{ss:intro-discussion}.

\subsection{Correlatively sparse POPs and their moment relaxations}

Denote the non-negative integers by $\mathbb{N}$. Fix $n\in\mathbb{N}$ positive and cover the set $[n]:=\{1,\ldots,n\}$ by subsets $\Delta_1,\ldots,\Delta_m \subseteq [n]$ called \emph{cliques}. We refer to the set of cliques as the \emph{clique cover}.
We assume without loss of generality that no clique is contained in another clique. 
Given a `global' vector $\vec{x} \in \mathbb{R}^n$, let $\vec{x}_{\Delta_i}$ be the `clique subvector' indexed by the elements of $\Delta_i$. 
A POP is correlatively sparse if it has the form
\begin{equation}\label{e:pop}\tag{POP}
        f^* := \min_{\vec{x} \in \mathbb{R}^n}\quad 
        \sum_{i=1}^m f_i(\vec{x}_{\Delta_i})
        \quad\text{s.t.}\quad
        \vec{g}_i(\vec{x}_{\Delta_i})\geq 0 \quad \forall i \in [m]
\end{equation}
for polynomials $f_i:\mathbb{R}^{\abs{\Delta_i}}\to \mathbb{R}$ and $\vec{g}_i:\mathbb{R}^{\abs{\Delta_i}}\to \mathbb{R}^{N_i}$ with $N_i \in \mathbb{N}$. The inequality constraints are understood element-wise. 

Fix a \emph{relaxation order} $\omega \in \mathbb{N}$ with $2\omega \geq \max \{1,\deg f, \deg \vec{g}_1,\ldots,\deg \vec{g}_m\}$. Let $\SparseExponents{2\omega}$ be the set of multi-indices $\alpha \in \mathbb{N}^n$ with degree $\alpha_1 + \cdots + \alpha_n \leq 2\omega$ and support contained in at least one clique (see \cref{ss:csp-stuff} for a precise definition). We call $\SparseExponents{2\omega}$ the set of \emph{correlatively sparse multi-indices} subordinate to the clique cover.  Let $\vec{f}$ be the vector of coefficients of $\smash{\sum_{i=1}^m f_i(\vec{x}_{\Delta_i})}$ with respect to the sparse monomial vector 
$\smash{\smon{\vec{x}}_{2\omega} = \left( \vec{x}^\alpha \right)_{\alpha \in \SparseExponents{2\omega}}}$. 
Finally, let $\smash{\mommat^{\omega}_{\Delta_{i}}(\vec{y})}$ and $\smash{\mommat^{\omega}_{\Delta_{i}}(\vec{g}_i\vec{y})}$ denote the usual \emph{moment} and \emph{localizing matrices} of order $\omega$ associated with the clique $\Delta_i$ (see \cref{ss:mommat} for definitions).
Using angled brackets to indicate inner products, the sparse moment relaxation of \cref{e:pop} of order $\omega$ is
\begin{equation}\label{e:mom}\tag{MOM}
    f_\omega^* := \min_{ \vec{y} \in \mathbb{R}^{|\SparseExponents{2\omega}|} }\quad \langle \vec{f}, \vec{y} \rangle 
    \quad\text{s.t.}\quad 
    \begin{cases}
    \mommat^\omega_{\Delta_{i}}(\vec{y}) \succeq 0 &\forall i\in[m],\\
    \mommat^\omega_{\Delta_{i}}(\vec{g}_i\vec{y}) \succeq 0 &\forall i\in[m],\\
    y_{\vec{0}} = 1.
    \end{cases}
\end{equation}

It is known that $\smash{f_\omega^* \leq f^*}$ (see, e.g., \cite{Waki2006,Lasserre2006}). Lasserre \cite{Lasserre2006} showed that $\smash{f_\omega^* \to f^*}$ as $\omega \to \infty$ if the feasible set of \cref{e:pop} satisfies a suitable compactness condition and if the cliques $\Delta_1,\ldots,\Delta_m$ satisfy the so-called \emph{running intersection property}:
\begin{equation}\label{e:rip}\tag{RIP}
    \forall i > 1, \quad
    \exists j \in [i-1]
    \quad\text{such that}\quad
    \Delta_{i}\cap(\Delta_{1}\cup\cdots\cup\Delta_{i-1})\subset \Delta_{j}.
\end{equation}
Estimates for the rate of this asymptotic convergence were established recently~\cite{Korda2025}. As we explain next, we identify sufficient rank conditions for the moment matrices, and certain submatrices thereof, that imply $\smash{f_\omega^* = f^*}$ for finite $\omega$.

\subsection{Main results}\label{ss:intro-results}
Let 
\begin{equation}\label{e:bsa}
    K := \left\{ \vec{x}\in\mathbb{R}^n:\quad \vec{g}_i(\vec{x}_{\Delta_i}) \geq 0 \quad \forall i\in[m]\right\}
\end{equation}
be the feasible set of \cref{e:pop}.
Our first contribution is to solve a correlatively sparse version of the truncated moment problem on $K$, which asks whether the entries of a vector $\vec{y}=(y_\alpha)_{\alpha\in \SparseExponents{2\omega}}$ are the moments of a positive Borel measure (called a \emph{representing measure}) supported on $K$. 
Specifically, let $\smash{\mommat_{\Delta_{i}\cap \Delta_{j}}^{\omega}(\vec{y})}$ be the largest principal submatrix of the moment matrix $\smash{\mommat_{\Delta_{i}}^{\omega}(\vec{y})}$ that, up to a symmetric permutation, is also a principal submatrix of $\smash{\mommat_{\Delta_{j}}^{\omega}(\vec{y})}$. For each $i \in [m]$, set
\begin{equation}\label{e:di-def}
    d_i = \max \left\{ 1, \left\lceil \tfrac12 \deg\vec{g}_i \right\rceil \right\}
\end{equation}
and let $\delta_{\vec{x}}$ be the Dirac measure at $\vec{x}$. 
In \cref{s:proof} we prove the following statement.

\begin{theorem}\label{thm:FF}
Fix a nonzero vector $\vec{y} =(y_\alpha)_{\alpha\in \SparseExponents{2\omega}}$.
Assume that:
\begin{enumerate}[{\rm a)}, widest=a, leftmargin=\parindent, labelsep=*]
    \item\label{ass:ff:rip} The cliques $\Delta_1,\ldots,\Delta_m$ satisfy \cref{e:rip};
    \item\label{ass:ff:lmis} For every $i \in [m]$, there exists $j \in [i-1]$ satisfying \cref{e:rip} such that
    \begin{subequations}
    \begin{align}
        \tag{\theequation}
        \label{e:lmis}
        \mommat^\omega_{\Delta_{i}}(\vec{y})\succeq 0, &
        \quad\mommat^\omega_{\Delta_{i}}(\vec{g}_i\vec{y}) \succeq 0,
        \\
        \nextParentEquation[e:rank-conditions-ff]
        \label{e:flat-moments}
        \rank\mommat_{\Delta_{i}}^{\omega}(\vec{y}) &=
        \rank\mommat_{\Delta_{i}}^{\omega-d_{i}}(\vec{y}),
        \\
        \label{e:flat-overlap}
        \rank\mommat_{\Delta_{i}\cap \Delta_{j}}^{\omega}(\vec{y}) &=
        \rank\mommat_{\Delta_{i}\cap \Delta_{j}}^{\omega-1}(\vec{y}).
    \end{align}
    \end{subequations}
    \end{enumerate}
    Then, there exist an integer $r\geq \max_{i\in[m]} \rank\mommat_{\Delta_{i}}^{\omega}(\vec{y})$, points $\vec{x}_1,\ldots,\vec{x}_r$ of $K$, and scalars $\lambda_1,\ldots,\lambda_r > 0$ such that the atomic measure $\lambda_1 \delta_{\vec{x}_1} + \cdots + \lambda_r \delta_{\vec{x}_r}$ represents~$\vec{y}$. In particular,
    \begin{equation}\label{e:atomic-y}
        \vec{y} = 
        \lambda_1 \smon{ \vec{x}_1 }_{2\omega} 
        + \cdots 
        + \lambda_r \smon{ \vec{x}_r }_{2\omega}.
    \end{equation}
\end{theorem}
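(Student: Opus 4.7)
\medskip
\noindent\textbf{Proof plan.} The plan is to apply the classical Curto--Fialkow flat-extension theorem clique by clique, producing atomic measures $\mu_1,\ldots,\mu_m$ on the individual cliques, and then glue them inductively in the order prescribed by \cref{e:rip} into a single atomic measure on $\mathbb{R}^n$.

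For the first step, fix $i\in[m]$ and consider the dense truncated subsequence $\vec{y}^{(i)} := (y_\alpha)_{\supp\alpha\subseteq\Delta_i,\ |\alpha|\le 2\omega}$ on $\mathbb{R}^{|\Delta_i|}$. Hypothesis \ref{ass:ff:lmis} says that $\vec{y}^{(i)}$ has a PSD, flat moment matrix at order $\omega$ and a PSD localizing matrix for the constraint $\vec{g}_i\ge 0$, so the classical flat-extension theorem yields a unique atomic measure $\mu_i$ supported on $K_i:=\{\vec{z}\in\mathbb{R}^{|\Delta_i|}:\vec{g}_i(\vec{z})\ge 0\}$ that represents $\vec{y}^{(i)}$ and has exactly $r_i := \rank\mommat^\omega_{\Delta_i}(\vec{y})$ atoms. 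Next, I would check that for every $i\ge 2$ with \cref{e:rip} witness $j\in[i-1]$ provided by \ref{ass:ff:lmis}, the $S_i$-marginals of $\mu_i$ and $\mu_j$ coincide, where $S_i := \Delta_i\cap\Delta_j = \Delta_i\cap(\Delta_1\cup\cdots\cup\Delta_{i-1})$ (the nontrivial inclusion is \cref{e:rip}). Both marginals are atomic representing measures on $\mathbb{R}^{|S_i|}$ for the truncated subsequence $(y_\alpha)_{\supp\alpha\subseteq S_i,\ |\alpha|\le 2\omega}$, whose moment matrix $\mommat^\omega_{S_i}(\vec{y})$ is PSD (a principal submatrix of $\mommat^\omega_{\Delta_i}(\vec{y})$) and flat by \cref{e:flat-overlap}. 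The unconstrained Curto--Fialkow theorem then forces uniqueness of such a representing measure, so the two marginals must agree.

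The core of the argument is an inductive gluing. I would construct atomic measures $\nu_i$ on $\mathbb{R}^{|\Delta_1\cup\cdots\cup\Delta_i|}$ satisfying the invariant that the $\Delta_k$-marginal of $\nu_i$ equals $\mu_k$ for every $k\le i$. Start from $\nu_1 := \mu_1$. Given $\nu_{i-1}$, write the common $S_i$-marginal as $\pi = \sum_t \pi_t\,\delta_{\vec{w}_t}$, disintegrate both $\nu_{i-1}$ and $\mu_i$ into conditional measures above each atom $\vec{w}_t$, and define
\[
\nu_i := \sum_t \pi_t\,(\nu_{i-1}\!\mid\!\vec{w}_t)\otimes \delta_{\vec{w}_t}\otimes(\mu_i\!\mid\!\vec{w}_t),
\]
i.e.\ take the independent product conditionally on $S_i=\vec{w}_t$. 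A direct summation shows that the $(\Delta_1\cup\cdots\cup\Delta_{i-1})$-marginal of $\nu_i$ is $\nu_{i-1}$ and the $\Delta_i$-marginal is $\mu_i$, preserving the invariant. The terminal $\nu_m$ is then an atomic measure on $\mathbb{R}^n$, supported on $K$ (since each $\Delta_i$-marginal lives on $K_i$), and representing $\vec{y}$ (any $\alpha\in\SparseExponents{2\omega}$ is supported in some $\Delta_i$, so $\int\vec{x}^\alpha\,\mathrm{d}\nu_m = \int\vec{z}^\alpha\,\mathrm{d}\mu_i = y_\alpha$). Writing $\nu_m = \sum_{k=1}^r \lambda_k\delta_{\vec{x}_k}$, a projection can only merge atoms, so $r\ge r_i$ for every $i$ and hence $r\ge\max_i r_i$.

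The main obstacle I foresee is the gluing step, which must preserve not only the newly introduced $\Delta_i$-marginal but \emph{all} previously installed $\Delta_k$-marginals. The running intersection property is what makes this possible: it confines the overlap $S_i$ to a single earlier clique $\Delta_j$, so the conditional product along $S_i$ factors cleanly through $\nu_{i-1}$'s $\Delta_j$-marginal and leaves every earlier $\Delta_k$-marginal intact. The overlap flatness \cref{e:flat-overlap} then supplies the uniqueness that forces the two $S_i$-marginals to coincide, which is what makes the two disintegrations combinable unambiguously. Without either ingredient one expects the gluing to fail generically.
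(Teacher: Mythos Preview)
Your approach is essentially identical to the paper's: apply Curto--Fialkow clique by clique, use the overlap flatness \cref{e:flat-overlap} to force the $S_i$-marginals of $\mu_i$ and $\mu_j$ to agree (the paper's \Cref{lem:measure-prop}), and glue inductively via the conditional-product construction you describe, which is precisely the atomic version of Lasserre's assembly lemma that the paper spells out in \cref{ss:explicit-recovery}. The one technical point you omit is the verification that every clique subvector $\vec{y}_{\Delta_i}$ is nonzero, so that $r_i\ge 1$ and the clique measures $\mu_i$ are genuinely nonzero; the paper handles this separately in \Cref{lem:zero-rank} and \Cref{prop:nnz-clique-y}, showing that under positive semidefiniteness and flatness, $\vec{y}\neq 0$ forces $y_{\vec{0}}\neq 0$ and hence every $\vec{y}_{\Delta_i}\neq 0$.
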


Using this result, we can derive the sufficient condition for the finite convergence of \cref{e:mom} announced above. The proof is short, so we give it immediately.

\begin{corollary}\label{thm:FF-POP}
Suppose the cliques $\Delta_1,\ldots,\Delta_m$ satisfy \cref{e:rip}.
If $\vec{y}$ is optimal for~\cref{e:mom} and satisfies the rank conditions in \cref{e:rank-conditions-ff}, then $f^*_\omega=f^*$ and the points $\vec{x}_1,\ldots,\vec{x}_r$ in the atomic decomposition \cref{e:atomic-y} are minimizers of \cref{e:pop}.
\end{corollary}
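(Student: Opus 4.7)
The plan is to invoke \cref{thm:FF} directly and then do some bookkeeping on the objective. I would first verify that its hypotheses hold for the given $\vec{y}$: assumption~\ref{ass:ff:rip} is the standing hypothesis of the corollary; the LMIs~\cref{e:lmis} hold for every $i\in[m]$ because $\vec{y}$ is feasible for \cref{e:mom}; and the rank equalities in \cref{e:rank-conditions-ff} are postulated. The theorem then produces an integer $r\geq \max_{i\in[m]}\rank\mommat^{\omega}_{\Delta_i}(\vec{y})$, atoms $\vec{x}_1,\dots,\vec{x}_r\in K$, and weights $\lambda_1,\dots,\lambda_r>0$ with $\vec{y}=\sum_{k=1}^r \lambda_k\smon{\vec{x}_k}_{2\omega}$.

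The second step is to feed this decomposition into the objective of \cref{e:mom}. Since $\vec{f}$ is by definition the coefficient vector of $\sum_{i=1}^m f_i(\vec{x}_{\Delta_i})$ in the sparse monomial basis $\smon{\vec{x}}_{2\omega}$, pairing with $\smon{\vec{x}_k}_{2\omega}$ just evaluates the objective polynomial of \cref{e:pop} at $\vec{x}_k$. Reading the constraint $y_{\vec{0}}=1$ off the atomic decomposition yields $\sum_k\lambda_k=1$. A short convex-combination estimate using $\vec{x}_k\in K$ (so that $\sum_i f_i(\vec{x}_{k,\Delta_i})\geq f^*$) then gives $f_\omega^* = \langle \vec{f},\vec{y}\rangle \geq f^*$. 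Combined with the standard lower-bound property $f_\omega^*\leq f^*$ recalled earlier in the introduction, this forces $f_\omega^*=f^*$.

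For the second assertion, equality in the previous estimate, together with the positivity of each $\lambda_k$ and the nonnegativity of each deficit $\sum_i f_i(\vec{x}_{k,\Delta_i})-f^*$, forces every $\vec{x}_k$ to be a minimizer of \cref{e:pop}. There is essentially no obstacle to this argument; all the substance is concentrated in \cref{thm:FF}, which is what actually produces the atomic representation, and the corollary falls out by elementary objective-value bookkeeping against that representation.
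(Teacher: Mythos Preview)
Your proposal is correct and follows essentially the same route as the paper's proof: invoke \Cref{thm:FF} (noting that feasibility in \cref{e:mom} supplies both \cref{e:lmis} and $y_{\vec{0}}=1\neq 0$), use the atomic decomposition together with $\sum_k\lambda_k=y_{\vec{0}}=1$ to sandwich $f_\omega^*$ between $f^*$ and $f^*$, and read off optimality of each atom from the strict positivity of the weights.
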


\begin{proof}
    The conditions of \Cref{thm:FF} are satisfied by assumption, so $\vec{y}$ admits the atomic decomposition \cref{e:atomic-y} with points $\vec{x}_1,\ldots,\vec{x}_r \in K$ and scalars $\lambda_1,\ldots,\lambda_r > 0$. In particular,
    $\lambda_1 + \cdots +\lambda_r = y_{\vec{0}} = 1$. Then, since $f(\vec{x}) = \langle \vec{f}, \smon{\vec{x}}_{2\omega}\rangle$ and since the points $\vec{x}_1,\ldots,\vec{x}_r$ are feasible for \cref{e:pop}, we obtain
    \begin{equation}
        f^*
        = \sum_{i=1}^r \lambda_i f^*
        \leq \sum_{i=1}^r \lambda_i f(\vec{x}_i)
        = \sum_{i=1}^r \lambda_i \langle \vec{f}, \smon{\vec{x}_i}_{2\omega}\rangle
        \overset{{\text{\cref{e:atomic-y}}}}{=} \langle \vec{f}, \vec{y}\rangle
        = f_\omega^* \leq f^*.
    \end{equation}
    All inequalities must clearly be equalities, giving $f_\omega^* = f^*$. Since $\lambda_1,\ldots,\lambda_r>0$, we must also have $f(\vec{x}_1) = \cdots = f(\vec{x}_r) = f^*$, so $\vec{x}_1,\ldots,\vec{x}_r$ are optimal for \cref{e:pop}.
\end{proof}

Before discussing how \Cref{thm:FF} and \Cref{thm:FF-POP} relate to existing results in the literature, some technical remarks are in order. 
Firstly, as demonstrated in \cref{ss:counterexample} and by \cite[Example~3.4]{Nie2024}, it is not possible to drop \cref{e:rip} without additional assumptions. 
Secondly, in some cases when \cref{e:rip} does not hold, the cliques can be reordered so that it does. Our notation $\Delta_1,\ldots,\Delta_m$ always refers to the reordered cliques and we say that \cref{e:rip} fails if there is no clique reordering for which it holds. 
Thirdly, while the asymptotic convergence result of~\cite{Lasserre2006} requires $K$ to be compact, \Cref{thm:FF} and \Cref{thm:FF-POP} apply also in the noncompact case.
Finally, in contrast to the classical $K$-moment problem (see \cite{Fialkow2016review,Laurent2009} for reviews of the subject), the representing measure in \Cref{thm:FF} is far from unique. For example, with $n=3$ and cliques $\Delta_1=\{1,2\}$ and $\Delta_2 = \{2,3\}$,  the measures
\begin{equation}\label{e:mu-lambda}
    \mu_\lambda = 
      \lambda\, \delta_{(1,0,1)}
    + \left( \frac{1}{2} - \lambda \right) \delta_{(1,0,-1)}
    + \left( \frac{1}{2} - \lambda \right) \delta_{(-1,0,1)}
    + \lambda \delta_{(-1,0,-1)}
\end{equation}
generate the same correlatively sparse moment vector $(y_\alpha)_{\alpha\in \SparseExponents{2\omega}}$  for every $\smash{\lambda \in [0,\frac12]}$ irrespective of $\omega\in\mathbb{N}$.
This lack of uniqueness arises because a correlatively sparse moment sequence only determines the $\vec{x}_{\Delta_i}$-marginals of a representing measure, which in general do not uniquely characterize the representing measure itself.

\subsection{Relation to previous work} \label{ss:intro-discussion}
The conditions for the finite convergence of \cref{e:mom} in \Cref{thm:FF-POP} generalize those in \cite[Theorem~3.5]{Nie2024}, which restricts the matrix ranks in \cref{e:rank-conditions-ff} to coincide and asks for \cref{e:flat-overlap} to hold for all pairs of intersecting cliques. \Cref{ss:ex1,ss:ex-pop} give two examples demonstrating that dropping these unnecessary requirements is important to detect finite convergence in practice.
Our result differs also from \cite[Theorem~3.7]{Lasserre2006}, which does not assume \cref{e:rip} but strengthens \cref{e:flat-overlap} by imposing that $\rank\mommat_{\Delta_{i}\cap \Delta_{j}}^{\omega}(\vec{y})=1$ for every pair of intersecting cliques. This is a strong restriction because it requires the vectors $\vec{x}_1,\ldots,\vec{x}_r$ to have the same entries in positions indexed by clique intersections. In our experience, this is rarely true in practice. 
On the other hand, we are not aware of any sufficient conditions that allow for weaker rank conditions without requiring the running intersection property \cref{e:rip}.

To the best of our knowledge, we are also the first to explicitly solve the truncated correlatively sparse $K$-moment problem. This is a result of independent interest beyond its role in sparse moment-SOS relaxations. In fact, as we now explain, a secondary contribution of our work is to recognize that prior convergence analyses of \cref{e:mom} implicitly rely on results similar to our \Cref{thm:FF}, which however are not stated explicitly. For example, \cite[Theorem 3.5]{Nie2024} follows from our \Cref{thm:FF} if one requires the matrix ranks in \cref{e:rank-conditions-ff} to coincide. Likewise, Lasserre's proof of the asymptotic convergence of \cref{e:mom}~\cite{Lasserre2006} tacitly uses the following solution to the \emph{full} correlatively sparse $K$-moment problem, where the finite vector $\vec{y} = (y_\alpha)_{\alpha \in \SparseExponents{2\omega}}$ is replaced by the infinite sequence $\vec{y} = (y_\alpha)_{\alpha \in \SparseExponents{\infty}}$. (We attribute the statement to Lasserre even though it does not appear in \cite{Lasserre2006} because we conclude it from his arguments.)

\begin{theorem}[Lasserre]\label{thm:lasserre}
    Fix a nonzero sequence $\vec{y} = (y_\alpha)_{\alpha \in \SparseExponents{\infty}}$. Assume that:
    \begin{enumerate}[{\rm a)}, widest=a, leftmargin=\parindent, labelsep=*]
    \item\label{ass:lasserre:rip} The cliques $\Delta_1,\ldots,\Delta_m$ satisfy \cref{e:rip};
    \item\label{ass:lasserre:lmis} For every $\omega \in \mathbb{N}$ and every $i\in[m]$, $\vec{y}$ satisfies $\smash{\mommat^{\omega}_{\Delta_{i}}(\vec{y})} \succeq 0$ and $\smash{\mommat^{\omega}_{\Delta_{i}}(\vec{g}_i\vec{y})} \succeq 0$;
    \item\label{ass:lasserre:archimedean} For every $i\in[m]$, there exist an  integer $R_i$ and sums-of-squares polynomials $\sigma_0,\ldots,\sigma_{N_i}:\mathbb{R}^{\abs{\Delta_i}}\to \mathbb{R}$ such that $\sigma_0(\vec{z}) + \sum_{j=1}^{N_i} g_{i,j}(\vec{z})\sigma_{j}(\vec{z}) = R_i^2 - |\vec{z}|^2$.
    \end{enumerate}
    Then, $\vec{y}$ has a representing measure supported on $K$.
\end{theorem}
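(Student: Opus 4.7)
The plan is to first extract, from each clique separately, a marginal representing measure using Putinar's Positivstellensatz, and then glue these clique-wise measures along the clique intersections, using \cref{e:rip} to prescribe the order of the gluing and the Archimedean hypothesis to ensure compact support. I would begin by fixing $i\in[m]$ and viewing the sub-sequence $(y_\alpha)$ indexed by multi-indices supported in $\Delta_i$ as the moment sequence of a linear functional $L_i$ on $\mathbb{R}[\vec{z}]$, with $\vec{z}=(z_k)_{k\in\Delta_i}$, defined by $L_i(\vec{z}^\alpha)=y_\alpha$. Assumption~\ref{ass:lasserre:lmis} makes $L_i$ nonnegative on every sum of squares and on every element of the quadratic module generated by $\vec{g}_i$, and assumption~\ref{ass:lasserre:archimedean} is the Archimedean condition on that module. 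Putinar's theorem then produces a probability measure $\mu_i$ on the compact set $K_i:=\{\vec{z}:\vec{g}_i(\vec{z})\geq 0\}\subseteq\mathbb{R}^{|\Delta_i|}$ that represents $L_i$.

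Next, I would process the cliques in the order $1,\ldots,m$ and construct inductively probability measures $\nu_i$ on $\mathbb{R}^{|S_i|}$, with $S_i:=\Delta_1\cup\cdots\cup\Delta_i$, whose $\Delta_k$-marginal equals $\mu_k$ for every $k\leq i$. Start with $\nu_1:=\mu_1$. For $i\geq 2$, pick $j<i$ as supplied by \cref{e:rip}; then $T_i:=\Delta_i\cap S_{i-1}\subset\Delta_j$ together with $\Delta_j\subset S_{i-1}$ yields $T_i=\Delta_i\cap\Delta_j$. By the inductive hypothesis, the $\Delta_j$-marginal of $\nu_{i-1}$ is $\mu_j$, so the $T_i$-marginals of both $\nu_{i-1}$ and $\mu_i$ are probability measures on $\mathbb{R}^{|T_i|}$ with identical moments (the entries of $\vec{y}$ indexed by multi-indices supported in $T_i$) and both are compactly supported; by uniqueness of the moment problem on compacts they coincide. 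Calling this common $T_i$-marginal $\pi$, I disintegrate $\mu_i$ and $\nu_{i-1}$ into regular conditional probabilities $\mu_i^{\vec{u}}$ and $\nu_{i-1}^{\vec{u}}$, and set
\begin{equation*}
\nu_i := \int_{\mathbb{R}^{|T_i|}} \nu_{i-1}^{\vec{u}}\otimes\delta_{\vec{u}}\otimes\mu_i^{\vec{u}}\, d\pi(\vec{u}),
\end{equation*}
viewed as a measure on $\mathbb{R}^{|S_i|}$ via the partition $S_i=(S_{i-1}\setminus\Delta_j)\sqcup T_i\sqcup(\Delta_i\setminus\Delta_j)$. Marginalizing out $\Delta_i\setminus\Delta_j$ recovers $\nu_{i-1}$, and marginalizing out $S_{i-1}\setminus\Delta_j$ recovers $\mu_i$, so the inductive hypothesis is preserved.

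Finally, $\nu:=\nu_m$ is a probability measure on $\mathbb{R}^n$ whose $\Delta_i$-marginal is $\mu_i$ and is therefore supported in $K_i$ for every $i$; hence $\supp\nu\subset K$. For any $\alpha\in\SparseExponents{\infty}$ with support in some clique $\Delta_i$, one has $\int\vec{x}^\alpha\,d\nu=\int\vec{z}^\alpha\,d\mu_i=y_\alpha$, so $\nu$ represents $\vec{y}$. The main obstacle is the gluing step: one needs both compactness of the marginal supports (supplied by the Archimedean condition, via Putinar) to identify the $T_i$-marginals from matching moments, and the equality $T_i=\Delta_i\cap\Delta_j$ furnished by \cref{e:rip} to link the new marginal to the inductive hypothesis. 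Without \cref{e:rip}, the intersection of $\Delta_i$ with the already-processed variables could straddle several previously handled cliques and there would be no single $\mu_j$ against which to match marginals, making the construction break down.
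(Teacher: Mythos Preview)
Your argument is correct and follows the same route the paper sketches (and attributes to Lasserre): Putinar's theorem applied clique-wise yields local representing measures $\mu_i$ on compact sets, moment-determinacy on compacts forces their $T_i$-marginals to agree, and the inductive disintegration-and-glue construction along \cref{e:rip} produces the global measure---this is exactly the continuous version of the atomic assembly the paper carries out in \cref{ss:explicit-recovery}. One minor slip: your partition of $S_i$ should read $(S_{i-1}\setminus T_i)\sqcup T_i\sqcup(\Delta_i\setminus T_i)$ rather than $(S_{i-1}\setminus\Delta_j)\sqcup T_i\sqcup(\Delta_i\setminus\Delta_j)$, since $\Delta_j\setminus T_i$ need not be empty and would otherwise be dropped from $S_i$.
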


Separating the solution of sparse moment problems from their application to moment-SOS relaxations also helps us clarify their proofs. For example, to prove \Cref{thm:lasserre}, one first applies to each clique $\Delta_i$ a measure-theoretic version of Putinar's Positivstellensatz \cite[Lemma~4.1]{Putinar1993}, which states that the `clique subsequence' of $\vec{y}$ indexed by the multi-indices $\alpha \in \SparseExponents{\infty}$ supported on $\Delta_i$ has a unique representing measure $\mu_{\Delta_i}$ when assumptions \ref{ass:lasserre:lmis} and \ref{ass:lasserre:archimedean} hold. Then, one verifies that the measures $\mu_{\Delta_1},\ldots,\mu_{\Delta_m}$ have consistent marginals on the clique intersections. Finally, one assembles these consistent `local' measures into the `global' representing measure $\mu$ through a technical construction based on \cref{e:rip} \cite[Lemma~6.4]{Lasserre2006}.

Our \Cref{thm:FF} can be proven following the same strategy because, as we will show, its assumptions guarantee that the `clique subvectors' of $\vec{y} = (y_\alpha)_{\alpha \in \SparseExponents{2\omega}}$ have representing measures $\mu_{\Delta_1},\ldots,\mu_{\Delta_m}$ with consistent marginals.
Specifically, conditions \cref{e:lmis} and \cref{e:flat-moments} enable a \emph{flat extension} of the moment matrices for each clique and ensure the existence of \emph{finitely atomic} representing measures $\mu_{\Delta_1},\ldots,\mu_{\Delta_m}$ for the clique subvectors (see \cite{CurtoFialkow2000,Fialkow2016review}, as well as \cite{Laurent2009} for an introductory exposition). Condition \cref{e:flat-overlap}, instead, ensures that these measures have consistent marginals.
We provide the technical details in \cref{s:proof}. 

In \cref{s:recovery}, we give an elementary and explicit version of Lasserre's measure assembly procedure for our atomic setting. This contribution is important for two reasons. First, it results into a new, simple algorithm for recovering the `atomic decomposition' \cref{e:atomic-y} in practice (see \cref{alg:measure_recovery}). This broadens the class of POPs for which minimizers can be extracted algorithmically. Second, our explicit construction enables us to prove that this algorithm returns an atomic representing measure with \emph{maximal support} (in the language of \cite{Fialkow2017corevariety,Blekherman2020corevariety}, it constructs the \emph{core variety} of the Riesz functional associated with $\vec{y} = (y_\alpha)_{\alpha \in \SparseExponents{2\omega}}$). In the context of moment-SOS relaxations of POPs, this means our algorithm extracts \emph{all} POP minimizers encoded by the solution of \cref{e:mom}.
As we explain, however, `sparser' representing measures can also be obtained if desired, using convex programming.

Finally, let us mention that many of the sufficient conditions to detect the finite convergence of \cref{e:mom} from \cite{Lasserre2006,Nie2024} were recently generalized to sparse polynomial matrix optimization problems \cite{Miller2024sparsePMI}. We expect analogues of \Cref{thm:FF} and \Cref{thm:FF-POP} to hold in this setting, too, but leave the problem to future work.
\vspace{-0.9mm}
\section{Preliminaries}
Before proving \Cref{thm:FF}, we define more precisely the notion of correlatively sparse multi-indices and monomials already introduced above. We also recall the definition of the moment and localizing matrices appearing in \Cref{thm:FF}, and we introduce some `clique projection' operators that will be useful in its proof.

\subsection{Correlatively sparse multi-indices and monomials}\label{ss:csp-stuff}
We say that a multi-index $\alpha = (\alpha_1,\ldots,\alpha_n) \in \mathbb{N}^n$ is supported on a clique $\Delta_i$, written $\supp\alpha \subseteq \Delta_i$, if $\alpha_j=0$ for all indices $j \notin \Delta_i$. For every $\omega \in \mathbb{N}$, the subset of \emph{correlatively sparse} multi-indices of degree up to $2\omega$ subordinate to the clique cover $\Delta_1,\ldots,\Delta_m$ is
\begin{equation}
    \SparseExponents{2\omega} :=
    {\textstyle \bigcup\limits_{i=1}^m}
    \left\{\alpha \in \mathbb{N}^n:\;\supp \alpha \subseteq \Delta_i, \quad \alpha_1 + \cdots + \alpha_n \leq 2\omega \right\}.
\end{equation}
We write $\smash{\smon{\vec{x}}_{2\omega} = \left( \vec{x}^\alpha \right)_{\alpha \in \SparseExponents{2\omega}}}$ for the vector of correlatively sparse monomials.

\subsection{Moment and localizing matrices}\label{ss:mommat}

Fix $\omega \in \mathbb{N}$.
The \emph{Riesz functional} associated with a vector $\vec{y} = (y_\alpha)_{\alpha \in \SparseExponents{2\omega}}$, denoted by $\riesz{\vec{y}}$, is the unique continuous linear functional on the polynomial space spanned by the correlatively sparse monomials $\smon{\vec{x}}_{2\omega}$ that satisfies
$\riesz{\vec{y}}(\vec{x}^\alpha) = y_\alpha$ for every exponent $\alpha \in \SparseExponents{2\omega}$. 

For every $i\in [m]$, recall that $\vec{x}_{\Delta_i}$ is the subvector of $\vec{x}$ indexed by clique $\Delta_i$. We list the full set of monomials with indeterminate $\vec{x}_{\Delta_i}$ and total degree up to $d$ in the vector
\begin{equation}
    \dmon{\vec{x}_{\Delta_i}}_{d}:= 
    \left( \vec{x}_{\Delta_i}^\alpha \right)_{\alpha\in \mathbb{N}^{|\Delta_i|}_d}
    \quad\text{with}\quad
    \mathbb{N}^{|\Delta_i|}_d:=
    \left\{\alpha \in \mathbb{N}^{|\Delta_i|}:\;\sum_{j=1}^{|\Delta_i|} \alpha_j \leq d
    \right\}.
\end{equation}
The moment matrix of order $d \leq \omega$ associated with clique $\Delta_i$ is then defined as
\begin{equation}
    \mommat^{d}_{\Delta_i}(\vec{y}) := 
    \riesz{\vec{y}}\bigg( \dmon{ \vec{x}_{\Delta_i} }_{d} \otimes \dmon{ \vec{x}_{\Delta_i} }_{d} \bigg),
\end{equation}
where $\otimes$ denotes the usual tensor product.
With $d_i$ defined as in \cref{e:di-def}, instead, the localizing matrix of order $d \in [d_i, \omega]$ associated with the clique $\Delta_i$ and the entry $g_{i,j}$ of the polynomial vector $\vec{g}_i:\mathbb{R}^{|\Delta_i|} \to \mathbb{R}^{N_i}$ is
\begin{equation}
    \mommat^{d}_{\Delta_i}(g_{i,j}\vec{y}) := 
    \riesz{\vec{y}}\bigg( 
    g_{i,j} (\vec{x}_{\Delta_i}) \,
    \big( \dmon{ \vec{x}_{\Delta_i} }_{d-d_i} \otimes \dmon{ \vec{x}_{\Delta_i} }_{d-d_i} \big)
    \bigg).
\end{equation}
The localizing matrix associated with the polynomial vector $\vec{g}_i$ is the block-diagonal matrix
\begin{equation}
    \mommat^{d}_{\Delta_i}(\vec{g}_i\vec{y})
    =\begin{bmatrix}
        \mommat^{d}_{\Delta_i}(g_{i,1}\vec{y}) \\
        &\ddots \\
        && \mommat^{d}_{\Delta_i}(g_{i,N_i}\vec{y})
    \end{bmatrix}.
\end{equation}
Note that, for fixed $i\in[m]$, the moment and localizing matrices depend not on the full vector $\vec{y}$, but only on (a subset of) the entries of the subvector
\begin{equation}\label{e:local-moments}
    \vec{y}_{\Delta_i} := 
    \riesz{\vec{y}}\left(  \dmon{ \vec{x}_{\Delta_i} }_{2\omega} \right)\,.
\end{equation}
These subvectors list precisely the entries $y_\alpha$ indexed by multi-indices $\alpha \in \SparseExponents{2\omega}$ supported on the clique $\Delta_i$.

\subsection{Clique projections and pushforwards}
For any index set $\Delta \subseteq [n]$, denote by $\vec{x}_\Delta$ the subvector of $\vec{x}$ indexed by the elements of $\Delta$. Let 
$\smash{\proj{\Delta}:\mathbb{R}^n\to\mathbb{R}^{|\Delta|}}$ 
be the projection operator satisfying
$\proj{\Delta} \vec{x}=\vec{x}_\Delta$ 
and denote its pseudoinverse by 
$\smash{\proj{\Delta}^\dagger:\mathbb{R}^{|\Delta|} \to \mathbb{R}^n}$.
We can identify $\proj{\Delta}$ with the $|\Delta| \times n$ matrix listing the rows of the $n\times n$ identity matrix indexed by $\Delta$, and $\smash{\proj{\Delta}^\dagger}$ with its transpose. Note that $\smash{\proj{\Delta}^\dagger\vec{z}}$ simply `lifts' a vector $\vec{z}\in\mathbb{R}^{|\Delta|}$ into a vector in $\mathbb{R}^n$ by placing $\vec{z}$ in the entries indexed by $\Delta$ and padding the rest with zeros. 

Further, given $\Delta' \subseteq \Delta \subseteq [n]$, let $\pi_{\Delta \shortto \Delta'}: \mathbb{R}^{|\Delta|} \to \mathbb{R}^{|\Delta'|}$ be the projection operator defined via
\begin{equation}
\pi_{\Delta \shortto \Delta'} \vec{z}  = \proj{\Delta'} \proj{\Delta}^\dagger \vec{z}.
\end{equation}
We denote the $\Delta'$-marginal of a measure $\mu$ on $\mathbb{R}^{|\Delta|}$ by $\pi_{\Delta \shortto \Delta'} \pushfwd \mu$ because it is the measure on $\mathbb{R}^{|\Delta'|}$ defined by pushing $\mu$ forward by $\pi_{\Delta \shortto \Delta'}$. Precisely,
\begin{equation}
    \pi_{\Delta \shortto \Delta'} \pushfwd \mu(E) := \mu \left( \pi_{\Delta \shortto \Delta'}^{-1} E \right)
\end{equation}
for every Borel set $E \subset \mathbb{R}^{|\Delta'|}$, where $\pi_{\Delta \shortto \Delta'}^{-1} E$ is the preimage of $E$. Note that if $\mu_{\Delta}$ is a representing measure for a vector $\vec{y}_\Delta$ defined as in \cref{e:local-moments}, then $\pi_{\Delta \shortto \Delta'} \pushfwd \mu_\Delta$ is a representing measure for the subvector $\vec{y}_{\Delta'}$.
\section{Proof of \texorpdfstring{\Cref{thm:FF}}{Theorem \ref{thm:FF}}}\label{s:proof}

We now detail the proof of \Cref{thm:FF}. In a preliminary step (\cref{ss:nnz-clique-y}) we show that if $\vec{y}$ is nonzero and satisfies the conditions in~\cref{e:lmis} and \cref{e:rank-conditions-ff}, then the clique subvectors $\vec{y}_{\Delta_1},\ldots,\vec{y}_{\Delta_m}$ defined in~\cref{e:local-moments} are also nonzero. This means $\rank \mommat^\omega_{\Delta_i}(\vec{y}) \geq 1$ for all $i \in [m]$, so we can introduce `local' atomic representing measures for the subvectors $\vec{y}_{\Delta_1},\ldots,\vec{y}_{\Delta_m}$. In a second step (\cref{ss:assembly}) we assemble these local measures to obtain the `global' representing measure in \Cref{thm:FF}.

\subsection{\texorpdfstring{The vectors $\vec{y}_{\Delta_i}$}{Clique subvectors} are not zero}\label{ss:nnz-clique-y}
We begin with a useful observation about a general \emph{dense} moment matrix
\begin{equation}
    \mommat^\omega(\tilde{\vec{y}}) := \riesz{\tilde{\vec{y}}}\left( \dmon{\tilde{\vec{x}}}_\omega \otimes \dmon{\tilde{\vec{x}}}_\omega \right),
\end{equation}
defined here for general vectors $\tilde{\vec{x}} \in \mathbb{R}^{\tilde{n}}$ and $\tilde{\vec{y}} \in \mathbb{R}^{\binom{\tilde{n}+2\omega}{\tilde{n}}}$. Later, we will apply this observation to the clique moment matrices $\mommat^\omega(\vec{y}_{\Delta_i}) = \mommat^\omega_{\Delta_i}(\vec{y})$, which are obtained with $\tilde{n}=|\Delta_i|$, $\tilde{\vec{x}}=\vec{x}_{\Delta_i}$, and $\tilde{\vec{y}} = \vec{y}_{\Delta_i}$.

\begin{lemma}\label{lem:zero-rank}
    Fix integers $\omega\geq d \geq 1$. Suppose $\tilde{\vec{y}} \in \mathbb{R}^{\binom{\tilde{n}+2\omega}{\tilde{n}}}$
    satisfies 
    $\mommat^\omega(\tilde{\vec{y}}) \succeq 0$,
    $\rank \mommat^{\omega - d}(\tilde{\vec{y}}) = \rank \mommat^\omega(\tilde{\vec{y}})$, and
    $\riesz{\tilde{\vec{y}}}(1) = 0$.
    Then $\tilde{\vec{y}}={0}$.
\end{lemma}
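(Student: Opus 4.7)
The plan is to invoke the classical Curto--Fialkow flat extension theorem to produce a finitely atomic representing measure for $\tilde{\vec{y}}$, and then to use the hypothesis $\riesz{\tilde{\vec{y}}}(1) = 0$ to force that measure---and hence $\tilde{\vec{y}}$ itself---to be zero.

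First I would reduce the rank hypothesis to the standard flat extension form $\rank \mommat^{\omega-1}(\tilde{\vec{y}}) = \rank \mommat^\omega(\tilde{\vec{y}})$. Since $d \geq 1$, the matrix $\mommat^{\omega-d}(\tilde{\vec{y}})$ is a principal submatrix of $\mommat^{\omega-1}(\tilde{\vec{y}})$, itself a principal submatrix of $\mommat^\omega(\tilde{\vec{y}})$, so
\begin{equation*}
    \rank \mommat^{\omega-d}(\tilde{\vec{y}}) \leq \rank \mommat^{\omega-1}(\tilde{\vec{y}}) \leq \rank \mommat^\omega(\tilde{\vec{y}}).
\end{equation*}
The hypothesized equality of the outer terms forces equality throughout. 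Combined with $\mommat^\omega(\tilde{\vec{y}}) \succeq 0$, the flat extension theorem (see, e.g., \cite{CurtoFialkow2000,Laurent2009}) then supplies a unique $r$-atomic representing measure $\tilde{\mu} = \sum_{i=1}^r \lambda_i \delta_{\tilde{\vec{x}}_i}$ for $\tilde{\vec{y}}$, with $r = \rank \mommat^\omega(\tilde{\vec{y}})$ and strictly positive weights $\lambda_1, \ldots, \lambda_r$.

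Finally, because $\tilde{\mu}$ represents $\tilde{\vec{y}}$, the hypothesis $\riesz{\tilde{\vec{y}}}(1) = 0$ translates into $\sum_{i=1}^r \lambda_i = 0$. Positivity of the weights forces $r = 0$, i.e., $\tilde{\mu}$ is the zero measure, and therefore $\tilde{y}_\alpha = \int \tilde{\vec{x}}^\alpha \, d\tilde{\mu} = 0$ for every $|\alpha| \leq 2\omega$. This gives $\tilde{\vec{y}} = 0$, as claimed.

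The main ingredient is the flat extension theorem, which is classical, so I do not foresee any genuine obstacle. If one wished to avoid representing measures altogether, an alternative route would combine $\mommat^\omega(\tilde{\vec{y}}) \succeq 0$ with the vanishing $(\mathbf{0},\mathbf{0})$ entry to annihilate the full first row and column of $\mommat^\omega(\tilde{\vec{y}})$, and then propagate the resulting column relations by iterating the rank equality; this bookkeeping is, however, noticeably heavier than the one-shot atomic-measure argument above.
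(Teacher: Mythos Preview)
Your argument is correct. The reduction to $\rank\mommat^{\omega-1}(\tilde{\vec{y}})=\rank\mommat^\omega(\tilde{\vec{y}})$ via the nested-rank inequality is fine, the flat extension theorem indeed yields an $r$-atomic representing measure with nonnegative weights (and the degenerate case $r=0$ causes no trouble: it corresponds to the zero measure), and the conclusion $\sum_i\lambda_i=\riesz{\tilde{\vec{y}}}(1)=0$ with $\lambda_i>0$ forces $r=0$, hence $\tilde{\vec{y}}=0$.

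The paper, however, takes exactly the elementary route you sketch in your final paragraph and then dismiss as ``noticeably heavier.'' Starting from $\mommat^0(\tilde{\vec{y}})=\riesz{\tilde{\vec{y}}}(1)=0$, it uses positive semidefiniteness of $\mommat^\omega(\tilde{\vec{y}})$ to propagate zeros: if the principal submatrix $\mommat^t(\tilde{\vec{y}})$ vanishes, then the corresponding rows of $\mommat^\omega(\tilde{\vec{y}})$ vanish, which forces $\riesz{\tilde{\vec{y}}}(\dmon{\tilde{\vec{x}}}_{t+\omega})=0$ and hence $\mommat^{\lfloor(t+\omega)/2\rfloor}(\tilde{\vec{y}})=0$. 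Iterating brings one to $\mommat^{\omega-d}(\tilde{\vec{y}})=0$, and the rank hypothesis then gives $\mommat^\omega(\tilde{\vec{y}})=0$. Your approach is shorter and cleaner but imports the full Curto--Fialkow machinery for what is ultimately a statement about a single positive semidefinite matrix; the paper's approach is entirely self-contained and keeps the lemma independent of the representing-measure theory that it is meant to support later in the paper.
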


\begin{proof}
    If $\omega=d$, then $\rank \mommat^\omega(\tilde{\vec{y}}) = \rank \mommat^{0}(\tilde{\vec{y}}) = \riesz{\tilde{\vec{y}}}(1) = 0$, which is true if and only if $\tilde{\vec{y}}=0$ by the definition of $\mommat^\omega(\tilde{\vec{y}})$.

    Next, suppose $\omega > d$. For every $t\in\{0,\ldots,\omega\}$, the moment matrix $\mommat^t(\tilde{\vec{y}})$ is a principal submatrix of $\mommat^\omega(\tilde{\vec{y}})$. We claim that
    \begin{equation}\label{e:zero-rank-increase}
        \mommat^{t}(\tilde{\vec{y}}) = {0}
        \quad\implies\quad
        \mommat^{\lfloor \frac12 (t + \omega) \rfloor}(\tilde{\vec{y}}) = {0}.
    \end{equation}
    Indeed, since $\mommat^\omega(\tilde{\vec{y}})$ is positive semidefinite and $\mommat^{t}(\tilde{\vec{y}})$ is one of its principal submatrices,
    $\mommat^{t}(\tilde{\vec{y}}) = {0}$ implies that all rows of $\mommat^\omega(\tilde{\vec{y}})$ indexed by the monomials in $\dmon{\tilde{\vec{x}}}_t$ must vanish, that is,
    \begin{equation}
        \riesz{\tilde{\vec{y}}}\left( \dmon{\tilde{\vec{x}}}_t \otimes \dmon{\tilde{\vec{x}}}_\omega \right) = {0}.
    \end{equation}
    This is equivalent to $\riesz{\tilde{\vec{y}}}(\dmon{\tilde{\vec{x}}}_{t+\omega})={0}$. Since the matrix $\mommat^{\lfloor \frac12 (t + \omega) \rfloor}(\tilde{\vec{y}})$ depends only on (a subset of) this vector, it is the zero matrix.
    
    Now, $\mommat^0(\tilde{\vec{y}}) = \riesz{\tilde{\vec{y}}}(1) = 0$ by assumption. Since $\omega>d \geq 1$ by assumption, for all $t \in \{0,\ldots,\omega-d-1\}$ we have that $\lfloor \frac12 (t + \omega) \rfloor \geq t+1$. Then, we can repeatedly apply \cref{e:zero-rank-increase} to conclude that $\mommat^{\omega - d}(\tilde{\vec{y}}) = {0}$. Since $\rank \mommat^{\omega - d}(\tilde{\vec{y}}) = \rank \mommat^\omega(\tilde{\vec{y}})$ by assumption, we conclude that  $\mommat^{\omega}(\tilde{\vec{y}}) = {0}$. Again, this holds if and only if $\tilde{\vec{y}}={0}$.
\end{proof}

The next result implies that if $\vec{y}\in\mathbb{R}^{|\SparseExponents{2\omega}|}$ is nonzero and satisfies the rank conditions in~\cref{e:rank-conditions-ff}, then all subvectors $\vec{y}_{\Delta_i}$ are nonzero.

\begin{proposition}\label{prop:nnz-clique-y}
    Fix $\omega \in \mathbb{N}$ and let $\vec{y}\in\mathbb{R}^{|\SparseExponents{2\omega}|}$ satisfy the rank conditions in \cref{e:rank-conditions-ff}. There exists $i \in [m]$ such that $\vec{y}_{\Delta_i}={0}$ if and only if $\vec{y}={0}$.\end{proposition}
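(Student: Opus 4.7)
The plan is to deduce the nontrivial direction from \Cref{lem:zero-rank} applied clique-by-clique. The reverse implication is immediate, since each $\vec{y}_{\Delta_i}$ is a subvector of $\vec{y}$. For the forward implication, the strategy is to propagate the vanishing of a single clique subvector to all the others through the shared scalar $y_{\vec{0}}$.

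Concretely, I would first observe that the multi-index $\vec{0} \in \mathbb{N}^n$ is supported on every clique, so that $y_{\vec{0}} = \riesz{\vec{y}_{\Delta_i}}(1)$ for every $i \in [m]$. Consequently, if $\vec{y}_{\Delta_{i_0}} = 0$ for some $i_0 \in [m]$, then $y_{\vec{0}} = 0$, and hence $\riesz{\vec{y}_{\Delta_i}}(1) = 0$ for every $i \in [m]$. Next, I would invoke the rank and positivity conditions in \cref{e:rank-conditions-ff}, which are imposed for every $i \in [m]$: the matrix $\mommat^{\omega}_{\Delta_i}(\vec{y})$ is positive semidefinite and satisfies $\rank \mommat^{\omega}_{\Delta_i}(\vec{y}) = \rank \mommat^{\omega-d_i}_{\Delta_i}(\vec{y})$. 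Since $d_i \geq 1$ by \cref{e:di-def} and $\omega \geq d_i$ by the standing assumption $2\omega \geq \deg \vec{g}_i$, \Cref{lem:zero-rank} applies to $\vec{y}_{\Delta_i}$ with $d = d_i$ and yields $\vec{y}_{\Delta_i} = 0$ for every $i \in [m]$. Finally, since every multi-index in $\SparseExponents{2\omega}$ is by definition supported on at least one clique, every entry of $\vec{y}$ appears in some $\vec{y}_{\Delta_i}$, and we conclude $\vec{y} = 0$.

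I do not foresee a serious obstacle. The argument reduces entirely to \Cref{lem:zero-rank} combined with the trivial observation that $y_{\vec{0}}$ is shared by every clique subvector. The only point that requires care is to verify that the hypotheses of \Cref{lem:zero-rank} hold uniformly in $i$, but this is built into the statement of \Cref{thm:FF}, whose positive semidefiniteness and flat moment rank conditions are imposed for every $i \in [m]$ regardless of which $j$ is selected by \cref{e:rip}.
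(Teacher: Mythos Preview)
Your proposal is correct and follows essentially the same argument as the paper: both use the vanishing of a single clique subvector to deduce $y_{\vec{0}}=\riesz{\vec{y}}(1)=0$, then apply \Cref{lem:zero-rank} clique-by-clique to conclude that every $\vec{y}_{\Delta_i}$ vanishes, hence $\vec{y}=0$. Your write-up is slightly more explicit about verifying the hypotheses $d_i\geq 1$ and $\omega\geq d_i$ of \Cref{lem:zero-rank}, and (like the paper) tacitly uses the positive semidefiniteness from \cref{e:lmis} even though the proposition statement only cites \cref{e:rank-conditions-ff}.
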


\begin{proof}
    The `if' part is obvious. For the converse, pick $i \in [m]$ such that $\smash{\vec{y}_{\Delta_i}={0}}$. The matrix $\smash{\mommat^\omega_{\Delta_i}(\vec{y})=\mommat^\omega(\vec{y}_{\Delta_i})}$ must then be zero and, since $\riesz{\vec{y}}(1)$ is one of its diagonal entries, we conclude that $\riesz{\vec{y}}(1) = 0$. 
    
    Next, for each $j \in [m]$ the vector $\smash{\vec{y}_{\Delta_j}}$ satisfies $\smash{\riesz{\vec{y}_{\Delta_j}}(1)=\riesz{\vec{y}}(1)=0}$. Using \cref{e:rank-conditions-ff}, the dense moment matrix $\smash{\mommat^\omega(\vec{y}_{\Delta_j}) = \mommat^\omega_{\Delta_j}(\vec{y})}$ is positive semidefinite and satisfies $\smash{\rank \mommat^{\omega-d_j}(\vec{y}_{\Delta_j}) = \rank \mommat^\omega(\vec{y}_{\Delta_j})}$. We then conclude from \Cref{lem:zero-rank} that $\vec{y}_{\Delta_j} = {0}$ for every $j \in [m]$, which in turn implies $\vec{y}={0}$.
\end{proof}

\subsection{Existence of the representing measure}
\label{ss:assembly}

We now construct the atomic representing measure for $\vec{y}$ claimed in \Cref{thm:FF}, thereby proving that theorem.

Since $\vec{y}$ is nonzero by assumption, the clique subvectors $\vec{y}_{\Delta_1},\ldots,\vec{y}_{\Delta_m}$ are not zero by \Cref{prop:nnz-clique-y}. They also satisfy \cref{e:lmis,e:flat-moments} by assumption. We can then invoke standard results on truncated moment sequences (see, for example, \cite[Theorem~1.6]{CurtoFialkow2000} and \cite[Theorem~1.6]{Laurent2005}) to conclude that, for every clique index $i \in [m]$, the vector $\vec{y}_{\Delta_i}$ has a unique atomic representing measure supported on $r_i = \smash{\rank \mommat^\omega_{\Delta_i}(\vec{y})}\geq 1$ points in the set
\begin{equation}\label{e:feasible-sets}
    K_i := \left\{ \vec{z}\in \mathbb{R}^{|\Delta_i|}:\; \vec{g}_i(\vec{z}) \geq 0 \right\}.
\end{equation}
Specifically, there exist `clique atoms' $\vec{z}^{i}_{1},\ldots,\vec{z}^{i}_{r_i} \in K_i$ and scalars $ \lambda^{i}_{1},\ldots,\lambda^{i}_{r_i} > 0$ such that $\vec{y}_{\Delta_i}$ is represented by the atomic measure
\begin{equation}\label{e:local-measure}
    \mu_{\Delta_i} = 
    \lambda^{i}_{1} \delta_{ \vec{z}^{i}_{1} }
    + \cdots + 
    \lambda^{i}_{r_i} \delta_{ \vec{z}^{i}_{r_i} }.
\end{equation}
The next lemma states that the measures $\mu_{\Delta_1},\ldots,\mu_{\Delta_m}$ have consistent marginals on particular clique intersections.

\begin{lemma}\label{lem:measure-prop}
    Under the assumptions of \Cref{thm:FF}, the measures $\mu_{\Delta_1},\ldots,\mu_{\Delta_m}$ satisfy the following consistency condition:
    for every $i \in [m]$, there exists $j \in [i-1]$ satisfying \cref{e:rip} such that
    \begin{equation}\label{e:marginals}
    \pi_{\Delta_i \shortto \Delta_i\cap\Delta_j}  \pushfwd \mu_{\Delta_i}
    =
    \pi_{\Delta_j \shortto \Delta_i\cap\Delta_j}  \pushfwd \mu_{\Delta_j}.
    \end{equation}
\end{lemma}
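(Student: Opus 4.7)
The plan is to handle $i=1$ as a vacuous case (since $[i-1]=\emptyset$), and for $i\geq 2$ to use the index $j$ supplied by assumption~\ref{ass:ff:lmis} of \Cref{thm:FF}, which satisfies \cref{e:rip} and for which the flat-overlap rank equality \cref{e:flat-overlap} holds. For this choice of $j$, the goal is to show that the two pushforward measures
$\mu_1 := \pi_{\Delta_i \shortto \Delta_i \cap \Delta_j} \pushfwd \mu_{\Delta_i}$ and
$\mu_2 := \pi_{\Delta_j \shortto \Delta_i \cap \Delta_j} \pushfwd \mu_{\Delta_j}$
coincide because they both solve a uniquely solvable truncated moment problem on $\mathbb{R}^{|\Delta_i\cap\Delta_j|}$.

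The first step is to verify that $\mu_1$ and $\mu_2$ represent the same truncated moment vector. By the pushforward identity for marginals recorded at the end of the preliminaries, $\mu_1$ is a representing measure for the sub-subvector of $\vec{y}_{\Delta_i}$ indexed by multi-indices $\alpha$ with $\supp \alpha \subseteq \Delta_i \cap \Delta_j$, and analogously for $\mu_2$. Since these entries come from the single global vector $\vec{y}$, they constitute one and the same truncated moment vector $\vec{y}_{\Delta_i\cap\Delta_j}$ (defined in analogy with \cref{e:local-moments}), so both pushforwards represent it. It therefore suffices to show that $\vec{y}_{\Delta_i\cap\Delta_j}$ admits a \emph{unique} representing measure.

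The second and crucial step invokes a flat extension theorem. The matrix $\mommat^\omega_{\Delta_i \cap \Delta_j}(\vec{y})$ is a principal submatrix of the positive semidefinite $\mommat^\omega_{\Delta_i}(\vec{y})$ and is therefore itself positive semidefinite; by \cref{e:flat-overlap} it is also a flat extension of $\mommat^{\omega-1}_{\Delta_i \cap \Delta_j}(\vec{y})$. The Curto--Fialkow/Laurent flat extension theorem (as in \cite[Theorem~1.6]{CurtoFialkow2000} and \cite[Theorem~1.6]{Laurent2005}) then guarantees that $\vec{y}_{\Delta_i\cap\Delta_j}$ has a unique representing measure, which is atomic with exactly $\rank \mommat^\omega_{\Delta_i \cap \Delta_j}(\vec{y})$ atoms. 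Hence $\mu_1 = \mu_2$, yielding \cref{e:marginals}.

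The main subtlety I expect is ensuring that the uniqueness applies to \emph{all} representing measures, not only the minimally supported atomic ones. This is a standard consequence of the flat extension condition: the column relations of $\mommat^\omega_{\Delta_i \cap \Delta_j}(\vec{y})$ cut out a zero-dimensional variety of cardinality at most $r := \rank \mommat^\omega_{\Delta_i\cap\Delta_j}(\vec{y})$ on which every representing measure must be supported, and the rank constraint on the moment matrix of such a measure then forces it to be $r$-atomic with uniquely determined weights. I will handle this by directly citing the appropriate form of the flat extension theorem rather than reproducing this supporting argument.
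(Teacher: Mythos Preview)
Your proposal is correct and follows essentially the same approach as the paper: choose the index $j$ furnished by assumption~\ref{ass:ff:lmis}, observe that both pushforwards represent the common subvector $\vec{y}_{\Delta_i\cap\Delta_j}$, and invoke a flat extension theorem (the paper cites \cite[Theorem~5.29]{Laurent2009}) to deduce uniqueness of that representing measure. Your additional remarks on the $i=1$ case, on positive semidefiniteness of the overlap moment matrix, and on why uniqueness extends to all representing measures are welcome clarifications but do not change the argument.
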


\begin{proof}
Fix $j \in [i-1]$ such that \cref{e:rip} and \cref{e:flat-overlap} hold, which exists by the assumptions of \Cref{thm:FF}. The marginals $\pi_{\Delta_i \shortto \Delta_i\cap\Delta_j}  \pushfwd \mu_{\Delta_i}$ and $\pi_{\Delta_j \shortto \Delta_i\cap\Delta_j}  \pushfwd \mu_{\Delta_j}$ are representing measures for $\vec{y}_{\Delta_i \cap \Delta_j}$ because this is a subvector of both $\vec{y}_{\Delta_i}$ and $\vec{y}_{\Delta_j}$, which are represented by $\mu_{\Delta_i}$ and $\mu_{\Delta_j}$. However, by a classical flat extension theorems for moment vectors (e.g., \cite[Theorem~5.19]{Laurent2009}), the rank condition in \cref{e:flat-overlap} implies that $\vec{y}_{\Delta_i \cap \Delta_j}$ has a \emph{unique} representing measure. Identity \cref{e:marginals} follows.
\end{proof}

We now employ a construction of Lasserre's \cite[Lemma~6.4]{Lasserre2006}, which states that if the cliques $\Delta_1,\ldots,\Delta_m$ satisfy \cref{e:rip}, then for any measures $\mu_{\Delta_1},\ldots,\mu_{\Delta_m}$ satisfying the consistency condition in \Cref{lem:measure-prop} there exists a measure $\mu$ supported on the set $K$ from \cref{e:bsa} with $\mu_{\Delta_i}$ as its ${\Delta_i}$-marginal.\footnote{The original statement is for probability measures and assumes that \cref{e:marginals} holds for all pairs of intersecting cliques, but its proof actually uses only the consistency condition in \Cref{lem:measure-prop}.}
The measure $\mu$ obtained by applying this result to the measures $\mu_{\Delta_1},\ldots,\mu_{\Delta_m}$ in \cref{e:local-measure} is clearly a representing measure for $\vec{y}$. It then follows from general arguments (see, e.g., \cite[Satz~4]{Richter1957} and \cite[Theorem~5.8]{Laurent2009}) that  $\vec{y}$ also has a finitely atomic representing measure with atoms in $K$. In fact, as demonstrated in \cref{ss:explicit-recovery}, the measure $\mu$ obtained with Lasserre's construction is already atomic. Finally, the number $r$ of atoms in $\mu$ satisfies
\begin{equation}
    r \geq \max_{i \in [m]} \,\rank \mommat^\omega_{\Delta_i}(\vec{y})
\end{equation}
because, for each $i \in [m]$, the clique marginal $\mu_{\Delta_i}$ must have exactly $\rank \mommat^\omega_{\Delta_i}(\vec{y})$ atoms, which must be projections onto clique $\Delta_i$ of the atoms of $\mu$. \Cref{thm:FF} is therefore proved.

\section{Recovering representing measures}\label{s:recovery}
We now replicate the measure assembly construction from \cite[Lemma~6.4]{Lasserre2006} in the special case of atomic measures. We do so to explicitly demonstrate that if $\vec{y}$ satisfies the conditions of \Cref{thm:FF}, then one can algorithmically recover an atomic representing measure for it (see \Cref{alg:measure_recovery} for a pseudo-code constructing the atoms and weights of such a measure). This is useful in practice because, when $\vec{y}$ is the optimal solution of the moment relaxation \cref{e:mom}, the atoms of the representing measure are optimizers of the corresponding polynomial optimization problem \cref{e:pop} (cf. \Cref{thm:FF-POP}). We also show in \cref{ss:max-support} that the recovered measure has maximal support among all possible atomic representing measures. Finally, in \cref{ss:other-measures} we use this observation and convex programming to search for other atomic representing measures.

\begin{algorithm}[t]
\caption{Construction of an atomic representing measure}
\label{alg:measure_recovery}
\begin{algorithmic}[1]
\Require
Cliques $\Delta_1,\ldots,\Delta_m$

\Require
A vector $\vec{y}=(y_\alpha)_{\alpha \in \SparseExponents{2\omega}}$ satisfying the conditions in \Cref{thm:FF}

\Ensure
Atoms and weights in an atomic representing measure for $\vec{y}$

\For{$i=1,\dots,m$}
    \State Extract the clique subvector $\vec{y}_{\Delta_i}$
    \State $\{(\vec{z}^i_\ell, \lambda^i_\ell)\}_{\ell = 1}^{r_i} \gets $ Atoms \& weights of a measure representing $\vec{y}_{\Delta_i}$ \cite{Henrion2005extraction}
\EndFor

\State Initialize $s = r_1$, $\{(\vec{u}_k, \nu_k)\}_{k=1}^s\gets \{(\vec{z}^1_\ell, \lambda^1_\ell)\}_{\ell=1}^s$, and $U_1 \gets \Delta_1$

\For{$i=2,\dots,m$}
    \State $U_i \gets U_{i-1}\cup \Delta_i$
    \State Choose $j<i$ such that $\Delta_i\cap U_{i-1}\subseteq \Delta_j$ (guaranteed by RIP)
    \State Extract the clique subvector $\vec{y}_{\Delta_i \cap \Delta_j}$
    \State $\{(\vec{v}_\tau,\theta_\tau)\}_{\tau=1}^t \gets $ Atoms \& weights of a measure representing $\vec{y}_{\Delta_i \cap \Delta_j}$ \cite{Henrion2005extraction}
    \State Initialize \texttt{atoms} $\gets \{\}$ and \texttt{weights} $\gets \{\}$
    \For{$\tau=1,\dots,t$}
        \For{$k = 1,\ldots,s$ such that $\pi_{U_{i-1} \shortto \Delta_i \cap \Delta_j} \vec{u}_k = \vec{v}_\tau$}
            \For{$\ell = 1,\ldots, r_i$ such that $\pi_{\Delta_{i} \shortto \Delta_i \cap \Delta_j} \vec{z}^{i}_\ell = \vec{v}_\tau$}
                \State $\vec{w}_{\tau,k,\ell} \gets$ concatenate $\vec{u}_k$ and $\vec{z}^{i}_\ell$
                \State \texttt{atoms} $\gets$ append $\vec{w}_{\tau,k,\ell}$
                \State \texttt{weights} $\gets$ append $\nu_k\,\lambda^i_\ell / \theta_\tau$
            \EndFor
        \EndFor
    \EndFor

    \State $s \gets$ number of elements in \texttt{atoms}
    \State $\{(\vec{u}_k , \nu_k)\}_{k=1}^s\gets$ enumerate \texttt{atoms} and corresponding \texttt{weights}
\EndFor

\State \Return \texttt{atoms} and \texttt{weights}
\end{algorithmic}
\end{algorithm}

\subsection{Construction of an atomic representing measure}
\label{ss:explicit-recovery}

Let $\vec{y}=(y_\alpha)_{\alpha \in \SparseExponents{2\omega}}$ satisfy the conditions in \Cref{thm:FF}.
For each clique $\Delta_i$, $i \in [m]$, an atomic measure $\mu_{\Delta_i}$ of the form \cref{e:local-measure} can be recovered from the vector $\vec{y}_{\Delta_i}$ using an algorithm based on standard linear algebra operations~\cite{Henrion2005extraction}. These measures can be assembled using an inductive procedure that, at each step $i\in[m]$, constructs an atomic measure $\mu^{i}$  on $\mathbb{R}^{|\Delta_1 \cup \cdots \cup \Delta_i|}$ whose marginals on the cliques $\Delta_1,\ldots,\Delta_i$ are precisely $\mu_{\Delta_1},\ldots,\mu_{\Delta_i}$. The atomic measure $\mu^{m}$ then represents $\vec{y}$.

We initialize $\mu^{1}=\mu_{\Delta_1}$ and, for notational simplicity, set $U_i = \Delta_1 \cup \cdots \cup \Delta_i$. Then, for every $i \in \{2,\ldots,m\}$, suppose we have an atomic measure
\begin{equation}
    \mu^{i-1} = \nu_1 \delta_{\vec{u}_1} + \cdots + \nu_s \delta_{\vec{u}_s}
\end{equation}
with weights $\nu_1,\ldots,\nu_s>0$ and points $\smash{\vec{u_1},\ldots,\vec{u}_s \in \mathbb{R}^{|U_{i-1}|}}$, whose marginals on the cliques $\Delta_1,\ldots,\Delta_{i-1}$ are precisely $\smash{\mu_{\Delta_1},\ldots,\mu_{\Delta_{i-1}}}$. We need to construct an atomic measure $\mu^{i}$ on $\mathbb{R}^{|U_{i}|}$ with $\mu^{i-1}$ as its $U_{i-1}$-marginal and $\mu_{\Delta_i}$ as its $\Delta_i$-marginal. 

To accomplish this, let us use \Cref{lem:measure-prop} to choose $j \in [i-1]$ for which \cref{e:marginals} holds. 
We claim that the $\Delta_i \cap \Delta_j$-marginals of $\mu^{i-1}$ and $\mu_{\Delta_i}$ match.
Indeed, since $\Delta_i \cap \Delta_j \subset \Delta_j \subset U_{i-1}$, and since the $\Delta_j$-marginal of $\mu^{i-1}$ is $\mu_{\Delta_j}$ by the induction assumption, we have 
\begin{align}
    \pi_{U_{i-1} \shortto \Delta_i \cap \Delta_j} \pushfwd \mu^{i-1} 
    &\;\,\,=\;\; \left( \pi_{\Delta_j \shortto \Delta_i \cap \Delta_j} \circ \pi_{U_{i-1} \shortto \Delta_j} \right) \pushfwd \mu^{i-1} 
    \\ \nonumber
    &\overset{\phantom{\text{\cref{e:marginals}}}}{=} \pi_{\Delta_j \shortto \Delta_i \cap \Delta_j} \pushfwd \left( \pi_{U_{i-1} \shortto \Delta_j} \pushfwd \mu^{i-1}  \right)
    \\ \nonumber
    &\overset{\phantom{\text{\cref{e:marginals}}}}{=} \pi_{\Delta_j \shortto \Delta_i \cap \Delta_j} \pushfwd \mu_{\Delta_j}
    \\ \nonumber
    &\overset{\text{\cref{e:marginals}}}{=}
    \pi_{\Delta_i \shortto \Delta_i \cap \Delta_j} \pushfwd \mu_{\Delta_i}.
\end{align}

Since the marginals of atomic measures are atomic, this equation means that there exist $t\geq 1$ points $\vec{v}_1,\ldots,\vec{v}_t\in \mathbb{R}^{|\Delta_i \cap \Delta_j|}$ and scalars $\theta_1,\ldots,\theta_t>0$ such that
\begin{align}\label{e:matching-measure}
    \pi_{U_{i-1} \shortto \Delta_i \cap \Delta_j} \pushfwd \mu^{i-1} 
    = \theta_1 \delta_{\vec{v}_1} + \cdots \theta_t \delta_{\vec{v}_t} =
    \pi_{\Delta_i \shortto \Delta_i \cap \Delta_j} \pushfwd \mu_{\Delta_i}.
\end{align}
In fact, $\theta_1 \delta_{\vec{v}_1} + \cdots \theta_t \delta_{\vec{v}_t}$ is the unique representing measure for the vector $\vec{y}_{\Delta_i \cap \Delta_j}$ used in the proof of \Cref{lem:measure-prop}.

On the other hand, by definition of $\mu^{i-1}$ and of $\mu_{\Delta_i}$, we have
\begin{align}
    \pi_{U_{i-1} \shortto \Delta_i \cap \Delta_j} \pushfwd \mu^{i-1}  
    &= \sum_{k=1}^s \nu_k \delta_{\pi_{U_{i-1} \shortto \Delta_i \cap \Delta_j} \vec{u}_k} ,
    \\
    \pi_{\Delta_i \shortto \Delta_i \cap \Delta_j} 
    \pushfwd \mu_{\Delta_i}
    &= \sum_{\ell=1}^{r_i} \lambda^i_\ell \delta_{\pi_{\Delta_{i} \shortto \Delta_i \cap \Delta_j} \vec{z}^i_\ell}.
\end{align}
Substituting these expressions into \cref{e:matching-measure} and noting the uniqueness of the representing measure in $\Delta_i\cap\Delta_j$ leads to two key observations. First, for every $\tau \in [t]$, the index sets
\begin{subequations}\label{e:matching-proj}
    \begin{align}
        \mathcal{A}_\tau &:= 
        \big\{ 
        k \in [s]:\;  \pi_{U_{i-1} \shortto \Delta_i \cap \Delta_j} \vec{u}_k = \vec{v}_\tau 
        \big\}
        \\
        \mathcal{B}_\tau &:= 
        \big\{ 
        \ell \in [r_i]:\;  \pi_{\Delta_{i} \shortto \Delta_i \cap \Delta_j} \vec{z}^{i}_\ell = \vec{v}_\tau
        \big\}
    \end{align}
\end{subequations}
are non-empty, pairwise disjoint, and cover $[s]$ and $[r_i]$, respectively. Second,
\begin{equation}\label{e:nu-lambda-coco}
     \sum_{ k \in \mathcal{A}_\tau } \nu_k  = 
     \sum_{ \ell \in \mathcal{B}_\tau } \lambda^i_\ell =
     \theta_\tau.
\end{equation}

Now, thanks to \cref{e:matching-proj}, for every $\tau \in [t]$ and every pair $(k,\ell)\in\mathcal{A}_\tau \times \mathcal{B}_\tau$ the entries of $\vec{u}_k$ and $\vec{z}^i_\ell$ indexed by the clique overlap $\Delta_i \cap \Delta_j$ match. Consequently, we can `concatenate' $\vec{u}_k$ and $\vec{z}^i_\ell$ to construct a point $\vec{w}_{\tau,k,\ell} \in \mathbb{R}^{|U_i|}$ satisfying
\begin{equation}
    \proj{U_{i-1}} \vec{w}_{\tau,k,\ell} = \vec{u}_k
    \qquad\text{and}\qquad
    \proj{\Delta_i} \vec{w}_{\tau,k,\ell} = \vec{z}^i_\ell.
\end{equation}
These identities, together with \cref{e:nu-lambda-coco}, enable one to verify that the atomic measure
\begin{equation}\label{e:atomic-lasserre}
    \mu^i := \sum_{\tau=1}^t \sum_{k \in \mathcal{A}_\tau} \sum_{\ell \in \mathcal{B}_\tau} \left(\frac{\nu_k \lambda^i_\ell}{\theta_\tau} \right) \delta_{\vec{w}_{\tau,k,\ell}}
\end{equation}
has $\mu^{i-1}$ and $\mu_{\Delta_i}$ as its $U_{i-1}$- and $\Delta_i$-marginals, as desired. This concludes the construction; we remark only that \cref{e:atomic-lasserre} is the atomic version of~\cite[Eq.~(6.7)]{Lasserre2006}.

\subsection{Maximality of the support}\label{ss:max-support}
We now prove that, among all possible atomic representing measures for a vector $\vec{y}$ satisfying the conditions of \Cref{thm:FF}, the one constructed in \cref{ss:explicit-recovery} has maximal support. In particular, this reveals that the atoms of the representing measure do not depend on how the cliques are ordered during the construction, provided of course that the ordering  satisfies \cref{e:rip}. As usual, we write $\supp(\mu)$ for the support of a measure $\mu$.

\begin{proposition}\label{prop:supp-maximality}
    Let $\vec{y}$ satisfy the conditions of \Cref{thm:FF} and let $\mu$ be its atomic representing measure constructed with the procedure of \cref{ss:explicit-recovery}. If $\mu'$ is any other atomic representing measure for $\vec{y}$, then  $\supp(\mu') \subseteq \supp(\mu)$.
\end{proposition}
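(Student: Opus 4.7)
The plan is to prove by induction on $i \in [m]$ that the $U_i$-marginal $\mu'^i := \pi_{[n] \shortto U_i} \pushfwd \mu'$ satisfies $\supp(\mu'^i) \subseteq \supp(\mu^i)$, where $\mu^i$ is the measure built at step $i$ of the construction in \cref{ss:explicit-recovery}. Taking $i=m$ will give the proposition since $U_m = [n]$, $\mu^m = \mu$, and $\mu'^m = \mu'$.

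The starting point is that the flat extension condition \cref{e:flat-moments} makes $\mu_{\Delta_i}$ the \emph{unique} representing measure of $\vec{y}_{\Delta_i}$, as already used in \cref{ss:assembly}. Consequently, any representing measure of $\vec{y}$ must have $\mu_{\Delta_i}$ as its $\Delta_i$-marginal. Applied to $\mu'$ with $i=1$, this identity gives $\mu'^1 = \mu_{\Delta_1} = \mu^1$ and handles the base case.

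For the inductive step, I would fix an atom $\vec{w} \in \supp(\mu'^i)$ and pick the same index $j \in [i-1]$ used in the construction of $\mu^i$. The projection $\pi_{U_i \shortto U_{i-1}} \vec{w}$ is an atom of $\mu'^{i-1}$, hence by the inductive hypothesis it equals some $\vec{u}_k$; likewise, $\pi_{U_i \shortto \Delta_i} \vec{w}$ is an atom of $\mu_{\Delta_i}$ and so equals some $\vec{z}^i_\ell$. Restricting these two identities further to the set $\Delta_i \cap \Delta_j$ shows that $\vec{u}_k$ and $\vec{z}^i_\ell$ agree there, which by the definition \cref{e:matching-proj} places $k$ and $\ell$ in matching index classes $\mathcal{A}_\tau$ and $\mathcal{B}_\tau$ for some $\tau \in [t]$. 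One then verifies that the concatenated point $\vec{w}_{\tau,k,\ell}$ appearing in \cref{e:atomic-lasserre} has the same $U_{i-1}$- and $\Delta_i$-projections as $\vec{w}$, and so equals $\vec{w}$. Thus $\vec{w} \in \supp(\mu^i)$, as required.

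The main delicacy is confirming that agreement on the possibly smaller set $\Delta_i \cap \Delta_j$ really is enough to force $\vec{w} = \vec{w}_{\tau,k,\ell}$. This is exactly where \cref{e:rip} is needed: it yields $\Delta_i \cap U_{i-1} \subseteq \Delta_j$, hence the equality $\Delta_i \cap U_{i-1} = \Delta_i \cap \Delta_j$, so that compatibility of $\vec{u}_k$ and $\vec{z}^i_\ell$ on $\Delta_i \cap \Delta_j$ lifts automatically to compatibility on the full overlap $\Delta_i \cap U_{i-1}$, allowing the two projections to glue into a unique point of $\mathbb{R}^{|U_i|}$.
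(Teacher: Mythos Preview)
Your argument is correct and tracks the paper's inductive proof closely. The only difference is packaging: the paper first introduces the set
\[
\mathcal{X}=\bigl\{\vec{x}\in\mathbb{R}^n:\ \proj{\Delta_i}\vec{x}\in\supp(\mu_{\Delta_i})\ \text{for all }i\in[m]\bigr\},
\]
notes that $\supp(\mu')\subseteq\mathcal{X}$ via the same uniqueness-of-clique-marginals observation you make, and then runs the induction on an arbitrary point of $\mathcal{X}$ rather than on an atom of $\mu'$. This buys the extra conclusion $\supp(\mu)=\mathcal{X}$, and hence independence of the support from the clique ordering used in the construction; otherwise the two proofs coincide step for step.
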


\begin{remark}
    In the algebraic language of core varieties \cite{Fialkow2017corevariety,Blekherman2020corevariety}, this result says that the procedure of \cref{ss:explicit-recovery} constructs the core variety of the Riesz functional $\riesz{\vec{y}}$.
\end{remark}

\begin{proof}
    Recall that, for each $i\in[m]$, the atomic measure $\mu_{\Delta_i}$ in \cref{e:local-measure} has finite support on the clique atoms, i.e., $\supp(\mu_{\Delta_i}) = \{\vec{z}^i_1,\ldots,\vec{z}^i_{r_i}\}$. Consider the set
    \begin{equation}
        \mathcal{X} = \left\{ \vec{x}\in\mathbb{R}^n:\; \proj{\Delta_i}\vec{x} \in \supp(\mu_{\Delta_i}) \quad \forall i \in [m]\right\},
    \end{equation}
    the definition of which does not depend on how the cliques are ordered. Since $\mu$ and $\mu'$ have $\mu_{\Delta_1},\ldots,\mu_{\Delta_m}$ as their clique marginals, they must both be supported on a subset of $\mathcal{X}$. Then, the claim follows if we can show that $\mathcal{X} \subseteq  \supp(\mu)$, which also proves that $\supp(\mu) = \mathcal{X}$ is independent of the clique ordering.

    With this in mind, fix $\vec{x}\in\mathcal{X}$. As in \cref{ss:explicit-recovery}, set $U_i=\Delta_1\cup \cdots \cup \Delta_i$ for every $i\in[m]$. To show that $\vec{x} \in \supp(\mu)$ it suffices to verify that, for every $i\in[m]$, the projection $\proj{U_i}\vec{x}$ is an atom of the measure $\mu^i$ in \cref{e:atomic-lasserre}. This is true for $i=1$ because $\mu^1 = \mu_{\Delta_1}$ and $\proj{U_1}\vec{x} = \proj{\Delta_1}\vec{x} \in \supp(\mu_{\Delta_1})$ by definition of $\vec{x}$. For $i \in \{2,\ldots,m\}$, instead, we proceed by induction and claim that if $\proj{U_{i-1}}\vec{x}$ is an atom of $\mu^{i-1}$, then $\proj{U_{i}}\vec{x}$ is an atom of $\mu^{i}$.
    
    To prove this claim, recall from \cref{ss:explicit-recovery} that the atoms of $\mu^i$ are constructed by concatenating every atom $\vec{u}_k$ of $\mu^{i-1}$ with all atoms $\vec{z}^i_\ell$ of $\mu_{\Delta_i}$ such that
    \begin{equation}
        \pi_{U_{i-1}\shortto U_{i-1} \cap \Delta_i} \vec{u}_k = \pi_{\Delta_i\shortto U_{i-1} \cap \Delta_i} \vec{z}^i_\ell.
    \end{equation}
    We now claim that this identity holds for $\vec{u}_k=\proj{U_{i-1}}\vec{x}$ and $\vec{z}^i_\ell = \proj{\Delta_i} \vec{x}$, which are valid choices by the induction assumption and the definition of $\vec{x}$. Indeed, by \cref{e:rip}, there exists a clique index $j\in[i-1]$ such that $U_{i-1} \cap \Delta_i \subset \Delta_j$ and, consequently,
    \begin{align}
        \pi_{U_{i-1}\shortto U_{i-1} \cap \Delta_i} \proj{U_{i-1}}\vec{x}
        &=\pi_{U_{i-1}\shortto \Delta_j \cap \Delta_i} \proj{U_{i-1}}\vec{x}
        \\ \nonumber
        &= \proj{\Delta_i \cap \Delta_j}\vec{x}
        \\ \nonumber
        &=\pi_{\Delta_i\shortto \Delta_j \cap \Delta_i} \proj{\Delta_i}\vec{x}.
    \end{align}
    Since concatenating $\proj{U_{i-1}}\vec{x}$ with $\proj{\Delta_i} \vec{x}$ yields exactly $\proj{U_i}\vec{x}$ we conclude that this point is an atom of $\mu^i$, as desired. This concludes the proof.
\end{proof}

\subsection{Finding other atomic representing measures}\label{ss:other-measures}
Atomic representing measures with the largest possible support are desirable in the context of moment relaxations of a POP because they allow one to maximize the number of distinct minimizers for the POP that one recovers. Other times, however, one may want to find atomic representing measures that are optimal with respect to different criteria.

Alternatives to the maximal representing measure $\mu$ constructed in \cref{ss:explicit-recovery}, when they exist, can be easily found using computational tools for convex optimization.
Specifically, let $\vec{x}_1,\ldots,\vec{x}_r$ be the (known) atoms of $\mu$ and fix any convex lower-semicontinuous function $c:\mathbb{R}^r \to \mathbb{R}$. 
One can then solve the convex program
\begin{equation}\label{e:lp}
    \begin{aligned}
        \min_{\gamma_1,\ldots,\gamma_r} \quad &
        c(\gamma_1,\cdots, \gamma_r)
        \\
        \text{s.t.} \quad 
        &\gamma_1 \smon{\vec{x}_1}_{2\omega} + \cdots + \gamma_r \smon{\vec{x}_r}_{2\omega} = \vec{y} \\
        &\gamma_1,\ldots,\gamma_r \geq 0
    \end{aligned}
\end{equation}
and obtain an atomic representing measure $\gamma_1 \delta_{\vec{x}_1} + \cdots + \gamma_r \delta_{\vec{x}_r}$ for $\vec{y}$. Note that the strict feasibility of this problem follows from the construction of \cref{ss:explicit-recovery}. Note also that the feasible set is compact because the constraints include the equation $\gamma_1 + \cdots + \gamma_r = y_{{0}}$, so the minimum is attained.

It is clear that varying $c$ should lead to different representing measures in general, although it may not be easy to design a cost function to single out a representing measure with desired properties (e.g., with the minimum number of atoms). Representing measures that are \emph{extreme} in the sense of convex analysis, however, can be found simply by taking $c$ to be a linear cost function. In fact, in this case \cref{e:lp} is a standard-form linear program and an extreme optimal point, corresponding to an extreme atomic representing measure, can be found with the simplex method. Since extreme solutions to linear programs are often sparse, this approach can help one find atomic representing measures with small support.
\section{Examples}\label{s:examples}

We conclude by giving some examples to illustrate our theoretical results.

\subsection{A first example}
\label{ss:ex1}
For our first example, we consider a correlatively sparse truncated moment problem on the whole space $K=\mathbb{R}^n$ with $n=3$ variables and two cliques $\Delta_1=\{1,2\}$, $\Delta_2=\{2,3\}$, which clearly satisfy \cref{e:rip}. We choose $\omega=2$ as the relaxation order and consider the vector $\vec{y}\in\mathbb{R}^{|\SparseExponents{4}|}$ whose $25$ entries are
\begin{equation}
    \begin{aligned}
    y_{000} &= 1, &
    y_{100} &= 0, & 
    y_{010} &= 0, & 
    y_{001} &= 0, & 
    y_{200} &= 1, \\ 
    y_{110} &= 0, &
    y_{020} &= 0, &
    y_{011} &= 0, & 
    y_{002} &= 1, & 
    y_{300} &= 0, \\
    y_{210} &= 0, & 
    y_{120} &= 0, &
    y_{030} &= 0, & 
    y_{021} &= 0, &
    y_{012} &= 0, \\ 
    y_{003} &= 0, &
    y_{400} &= 1, & 
    y_{310} &= 0, &
    y_{220} &= 0, & 
    y_{130} &= 0, \\ 
    y_{040} &= 0, & 
    y_{031} &= 0, &
    y_{022} &= 0, & 
    y_{013} &= 0, & 
    y_{004} &= 1.
    \end{aligned}
\end{equation}
The moment matrices appearing in \cref{e:lmis} are then
\begin{equation}
    \mommat^\omega_{\Delta_1}(\vec{y}) = 
    \begin{bmatrix}
         1 & 0 & 0 & 1 & 0 & 0\\
         0 & 1 & 0 & 0 & 0 & 0\\
         0 & 0 & 0 & 0 & 0 & 0\\
         1 & 0 & 0 & 1 & 0 & 0\\
         0 & 0 & 0 & 0 & 0 & 0\\
         0 & 0 & 0 & 0 & 0 & 0
    \end{bmatrix}
    \quad\text{and}\quad
    \mommat^\omega_{\Delta_2}(\vec{y}) = 
    \begin{bmatrix}
        1 & 0 & 0 & 0 & 0 & 1\\
        0 & 0 & 0 & 0 & 0 & 0\\
        0 & 0 & 1 & 0 & 0 & 0\\
        0 & 0 & 0 & 0 & 0 & 0\\
        0 & 0 & 0 & 0 & 0 & 0\\
        1 & 0 & 0 & 0 & 0 & 1
    \end{bmatrix}.
\end{equation}
They are easily seen to be positive semidefinite.
No localizing matrices need be considered because $K=\mathbb{R}^3$ is the full space in this example. This also means we have $d_1=d_2=1$ in \cref{e:di-def}, so the moment submatrices appearing in \cref{e:flat-moments} are
\begin{equation}
    \mommat^{\omega-d_1}_{\Delta_1}(\vec{y}) = 
    \begin{bmatrix}
         1 & 0 & 0\\
         0 & 1 & 0\\
         0 & 0 & 0
    \end{bmatrix}
    \quad\text{and}\quad
    \mommat^{\omega-d_2}_{\Delta_2}(\vec{y}) = 
    \begin{bmatrix}
        1 & 0 & 0\\
        0 & 0 & 0\\
        0 & 0 & 1
    \end{bmatrix}.
\end{equation}
Finally, the `overlap' moment matrices appearing in \cref{e:flat-overlap} are
\begin{equation}
    \mommat^\omega_{\Delta_1 \cap \Delta_2}(\vec{y}) = 
    \begin{bmatrix}
        1 & 0 & 0\\
        0 & 0 & 0\\
        0 & 0 & 0
    \end{bmatrix}
    \quad\text{and}\quad
    \mommat^{\omega-1}_{\Delta_1 \cap \Delta_2}(\vec{y}) = 
    \begin{bmatrix}
        1 & 0\\
        0 & 0
    \end{bmatrix}.
\end{equation}

With all these matrices at hand, one can easily verify that the conditions of \Cref{thm:FF} hold with
\begin{subequations}\label{e:ex1-ranks}
    \begin{align}
        \rank\mommat_{\Delta_{1}}^{\omega}(\vec{y}) &=
        \rank\mommat_{\Delta_{1}}^{\omega-d_{1}}(\vec{y}) = 2,
        \\
        \rank\mommat_{\Delta_{2}}^{\omega}(\vec{y}) &=
        \rank\mommat_{\Delta_{2}}^{\omega-d_{2}}(\vec{y}) = 2,
        \\
        \rank\mommat_{\Delta_{1}\cap \Delta_{2}}^{\omega}(\vec{y}) &=
        \rank\mommat_{\Delta_{1}\cap \Delta_{2}}^{\omega-1}(\vec{y}) = 1.
    \end{align}
\end{subequations}
We conclude that the vector $\vec{y}$ has an atomic representing measure. 
To recover it, we first apply the algorithm described in~\cite{Henrion2005extraction} to obtain atomic representing measures for the vectors $\vec{y}_{\Delta_1}$ and $\vec{y}_{\Delta_2}$ from the moment matrices $\mommat_{\Delta_{1}}^{\omega}(\vec{y})$ and $\mommat_{\Delta_{2}}^{\omega}(\vec{y})$, yielding
\begin{equation}
    \mu_{\Delta_1} = \frac12 \delta_{(1,0)} + \frac12 \delta_{(-1,0)}
    \quad\text{and}\quad
    \mu_{\Delta_2} = \frac12 \delta_{(0,1)} + \frac12 \delta_{(0,-1)}.
\end{equation}
Then, we run the second part of \cref{alg:measure_recovery} to concatenate the atoms $\vec{z}^{1}_{1}=(1,0)$  and  $\vec{z}^{1}_{2}=(-1,0)$ for clique $\Delta_1$ with the atoms $\vec{z}^{2}_{1}=(0,1)$ and $\vec{z}^{2}_{2}=(0,-1)$ for clique $\Delta_2$. The procedure yields four `global' atoms, $\pm (1,0,1)$ and $\pm(1,0,-1)$, together the associated weights in the atomic representing measure
\begin{equation}
    \mu = \frac14 \delta_{(1,0,1)} + \frac14 \delta_{(1,0,-1)} + \frac14 \delta_{(-1,0,1)} + \frac14 \delta_{(-1,0,-1)}.
\end{equation}
Indeed, one can easily verify that
\begin{equation}\label{e:ex1-atomic-dec}
    \vec{y} = 
    \frac14 \smon{(1,0,1)}_{4} + 
    \frac14 \smon{(1,0,-1)}_{4} + 
    \frac14 \smon{(-1,0,1)}_{4} + 
    \frac14 \smon{(-1,0,-1)}_{4}.
\end{equation}

This atomic decomposition has maximal support by \Cref{prop:supp-maximality}. However, it is far from unique: indeed, $\vec{y}$ is represented by the atomic measure $\mu_\lambda$ in \cref{e:mu-lambda} for every $\lambda \in [0,\frac12]$. The two extreme measures within this family, $\mu_0$ and $\smash{\mu_{1/2}}$, can be found by solving the convex program \cref{e:lp} with the linear cost functions $c(\gamma_1,\ldots,\gamma_4)=\gamma_1$ and $c(\gamma_1,\ldots,\gamma_4)=\gamma_2$, respectively. In fact, any linear function $c_1\gamma_1 + \cdots + c_4 \gamma_4$ is uniquely minimized by $\mu_0$ if $c_1+c_4 > c_2+c_3$ and by $\smash{\mu_{1/2}}$ if $c_1+c_4 < c_2+c_3$. If $c_1+c_4 = c_2+c_3$, all measures $\mu_\lambda$ with $\lambda \in [0,\frac12]$ are optimal.

Finally, let us remark that the existence of a representing measure for the vector $\vec{y}$ in this example could also be determined using \cite[Theorem~3.7]{Lasserre2006}, but not using \cite[Theorem~3.5]{Nie2024} because not all matrix ranks in \cref{e:ex1-ranks} are equal. Next, we give an example where neither of these previous results applies, demonstrating the usefulness of our \Cref{thm:FF}.

\subsection{Minimizers of a POP}
\label{ss:ex-pop}

We now use \Cref{thm:FF-POP} to detect the finite convergence of sparse moment relaxations for a simple POP with $n=4$ variables,
\begin{equation}\label{e:ex-pop}
    \begin{aligned}
    f^* = \min_{\vec{x}\in\mathbb{R}^{4}}\quad & \left(x_{1}^2-1\right)^{2}+\left(x_{2}^{2}-1\right)^{2}+x_3^2+\left(x_{4}^{2}-1\right)^{2}\\
    \text{s.t.}\quad 
     & g_1(\vec{x}_{\Delta_1}) := 3-x_{1}^{2}-x_{2}^{2}\geq0 \\
     & g_2(\vec{x}_{\Delta_2}) := 3-x_{2}^{2}-x_{3}^{2}\geq0 \\
     & g_3(\vec{x}_{\Delta_3}) := 3-x_{3}^{2}-x_{4}^{2}\geq0.
    \end{aligned}
\end{equation}
The optimal value $f^*=0$ is attained by the minimizers
\begin{equation}
    \begin{aligned}
        \mathbf{x}^*_1 &= (\phantom{-}1,\phantom{-}1,\phantom{-}0,\phantom{-}1) &&&
        \mathbf{x}^*_2 &= (\phantom{-}1,\phantom{-}1,\phantom{-}0,-1) \\
        \mathbf{x}^*_3 &= (\phantom{-}1,-1,\phantom{-}0,\phantom{-}1) &&&
        \mathbf{x}^*_4 &= (\phantom{-}1,-1,\phantom{-}0,-1) \\
        \mathbf{x}^*_5 &= (-1,-1,\phantom{-}0,\phantom{-}1) &&&
        \mathbf{x}^*_6 &= (-1,-1,\phantom{-}0,-1)\\
        \mathbf{x}^*_7 &= (-1,\phantom{-}1,\phantom{-}0,\phantom{-}1) &&&
        \mathbf{x}^*_8 &= (-1,\phantom{-}1,\phantom{-}0,-1).
    \end{aligned}
\end{equation}

\subsubsection{Solution of the moment relaxation}
We used MOSEK (version 10.1.24) \cite{mosek} to solve the sparse relaxation \cref{e:mom} for increasing relaxation orders $\omega$, which we built using the variable cliques $\Delta_{1}=\{1,2\}$, $\Delta_{2}=\{2,3\}$ and $\Delta_{3}=\{3,4\}$. These are easily seen to satisfy \cref{e:rip}. For $\omega=3$, the optimal vector $\vec{y}$ has 116 entries, which we do not report for brevity. After rounding these entries to 4 decimal places to remove numerical inaccuracies, the corresponding moment matrices $\smash{\mommat_{\Delta_i}^\omega(\vec{y})}$ appearing in \cref{e:flat-moments} are
\begin{subequations}
    \begin{align}
    \mommat_{\Delta_1}^\omega(\vec{y})
    &=
    \begin{bNiceMatrix}[c,margin]
    \CodeBefore
    \rectanglecolor{black!10}{1-1}{3-3}
    \rectanglecolor{black!10}{1-5}{3-6}
    \rectanglecolor{black!10}{1-8}{3-8}
    \rectanglecolor{black!10}{5-8}{6-8}
    \rectanglecolor{black!10}{5-1}{6-3}
    \rectanglecolor{black!10}{8-1}{8-3}
    \rectanglecolor{black!10}{8-5}{8-6}
    \rectanglecolor{black!10}{5-5}{6-6}
    \rectanglecolor{black!10}{8-8}{8-8}
    \Body
    1 & 0 & 1 & 0 & 0 & 0 & 0 & 1 & 0 & 0\\
    0 & 1 & 0 & 1 & 0 & 0 & 0 & 0 & 1 & 0\\
    1 & 0 & 1 & 0 & 0 & 0 & 0 & 1 & 0 & 0\\
    0 & 1 & 0 & 1 & 0 & 0 & 0 & 0 & 1 & 0\\
    0 & 0 & 0 & 0 & 1 & 0 & 1 & 0 & 0 & 1\\
    0 & 0 & 0 & 0 & 0 & 1 & 0 & 0 & 0 & 0\\
    0 & 0 & 0 & 0 & 1 & 0 & 1 & 0 & 0 & 1\\
    1 & 0 & 1 & 0 & 0 & 0 & 0 & 1 & 0 & 0\\
    0 & 1 & 0 & 1 & 0 & 0 & 0 & 0 & 1 & 0\\
    0 & 0 & 0 & 0 & 1 & 0 & 1 & 0 & 0 & 1
    \end{bNiceMatrix},
    \end{align}
    \begin{align}
    \mommat_{\Delta_2}^\omega(\vec{y})
    &=
    \begin{bNiceMatrix}[c,margin]
    \CodeBefore
    \rectanglecolor{black!10}{1-1}{3-3}
    \rectanglecolor{black!10}{1-5}{3-6}
    \rectanglecolor{black!10}{1-8}{3-8}
    \rectanglecolor{black!10}{5-8}{6-8}
    \rectanglecolor{black!10}{5-1}{6-3}
    \rectanglecolor{black!10}{8-1}{8-3}
    \rectanglecolor{black!10}{8-5}{8-6}
    \rectanglecolor{black!10}{5-5}{6-6}
    \rectanglecolor{black!10}{8-8}{8-8}
    \Body
    1 & 0 & 0 & 0 & 0 & 0 & 0 & 1 & 0 & 0\\
    0 & 0 & 0 & 0 & 0 & 0 & 0 & 0 & 0 & 0\\
    0 & 0 & 0 & 0 & 0 & 0 & 0 & 0 & 0 & 0\\
    0 & 0 & 0 & 0 & 0 & 0 & 0 & 0 & 0 & 0\\
    0 & 0 & 0 & 0 & 1 & 0 & 0 & 0 & 0 & 1\\
    0 & 0 & 0 & 0 & 0 & 0 & 0 & 0 & 0 & 0\\
    0 & 0 & 0 & 0 & 0 & 0 & 0 & 0 & 0 & 0\\
    1 & 0 & 0 & 0 & 0 & 0 & 0 & 1 & 0 & 0\\
    0 & 0 & 0 & 0 & 0 & 0 & 0 & 0 & 0 & 0\\
    0 & 0 & 0 & 0 & 1 & 0 & 0 & 0 & 0 & 1
    \end{bNiceMatrix},
    \\[1ex]
    \mommat_{\Delta_3}^\omega(\vec{y})
    &=
    \begin{bNiceMatrix}[c,margin]
    \CodeBefore
    \rectanglecolor{black!10}{1-1}{3-3}
    \rectanglecolor{black!10}{1-5}{3-6}
    \rectanglecolor{black!10}{1-8}{3-8}
    \rectanglecolor{black!10}{5-8}{6-8}
    \rectanglecolor{black!10}{5-1}{6-3}
    \rectanglecolor{black!10}{8-1}{8-3}
    \rectanglecolor{black!10}{8-5}{8-6}
    \rectanglecolor{black!10}{5-5}{6-6}
    \rectanglecolor{black!10}{8-8}{8-8}
    \Body
    1 & 0 & 1 & 0 & 0 & 0 & 0 & 0 & 0 & 0\\
    0 & 1 & 0 & 1 & 0 & 0 & 0 & 0 & 0 & 0\\
    1 & 0 & 1 & 0 & 0 & 0 & 0 & 0 & 0 & 0\\
    0 & 1 & 0 & 1 & 0 & 0 & 0 & 0 & 0 & 0\\
    0 & 0 & 0 & 0 & 0 & 0 & 0 & 0 & 0 & 0\\
    0 & 0 & 0 & 0 & 0 & 0 & 0 & 0 & 0 & 0\\
    0 & 0 & 0 & 0 & 0 & 0 & 0 & 0 & 0 & 0\\
    0 & 0 & 0 & 0 & 0 & 0 & 0 & 0 & 0 & 0\\
    0 & 0 & 0 & 0 & 0 & 0 & 0 & 0 & 0 & 0\\
    0 & 0 & 0 & 0 & 0 & 0 & 0 & 0 & 0 & 0
    \end{bNiceMatrix}.
\end{align}
\end{subequations}
The shaded parts of these matrices indicate the submatrices $\smash{\mommat_{\Delta_i}^{\omega-d_i}(\vec{y})}$ (in this example, $d_1=d_2=d_3=1$). The `overlap' moment matrices appearing in \cref{e:flat-overlap}, instead, are
\begin{equation}
    \mommat_{\Delta_1\cap\Delta_2}^\omega(\vec{y})
    =
    \begin{bNiceMatrix}[c,margin]
    \CodeBefore
    \rectanglecolor{black!10}{1-1}{3-3}
    \Body
    1 & 0 & 1 & 0\\
    0 & 1 & 0 & 1\\
    1 & 0 & 1 & 0\\
    0 & 1 & 0 & 1
    \end{bNiceMatrix}
    \quad\text{and}\quad
    \mommat_{\Delta_2\cap\Delta_3}^\omega(\vec{y})
    =
    \begin{bNiceMatrix}[c,margin]
    \CodeBefore
    \rectanglecolor{black!10}{1-1}{3-3}
    \Body
    1 & 0 & 0 & 0\\
    0 & 0 & 0 & 0\\
    0 & 0 & 0 & 0\\
    0 & 0 & 0 & 0
    \end{bNiceMatrix}.
\end{equation}
Again, the shaded parts correspond to the submatrices $\smash{\mommat_{\Delta_1\cap\Delta_2}^{\omega-1}(\vec{y})}$ and $\smash{\mommat_{\Delta_2\cap\Delta_3}^{\omega-1}(\vec{y})}$. 

\subsubsection{Detecting finite convergence}
Using the moment matrices reported above, one can check that the rank conditions in \cref{e:flat-moments,e:flat-overlap} hold with
\begin{subequations}\label{e:ex2-ranks}
    \begin{gather}
        \rank\mommat_{\Delta_{1}}^{\omega}(\vec{y}) =
        \rank\mommat_{\Delta_{1}}^{\omega-d_{1}}(\vec{y}) = 4,
        \\
        \rank\mommat_{\Delta_{2}}^{\omega}(\vec{y}) =
        \rank\mommat_{\Delta_{2}}^{\omega-d_{2}}(\vec{y}) = 2,
        \\
        \rank\mommat_{\Delta_{3}}^{\omega}(\vec{y}) =
        \rank\mommat_{\Delta_{3}}^{\omega-d_{3}}(\vec{y}) = 2,
        \\
        \rank\mommat_{\Delta_{1}\cap \Delta_{2}}^{\omega}(\vec{y}) =
        \rank\mommat_{\Delta_{1}\cap \Delta_{2}}^{\omega-1}(\vec{y}) = 2,
        \\
        \rank\mommat_{\Delta_{2}\cap \Delta_{3}}^{\omega}(\vec{y}) =
        \rank\mommat_{\Delta_{2}\cap \Delta_{3}}^{\omega-1}(\vec{y}) = 1.
    \end{gather}
\end{subequations}
We then conclude from \Cref{thm:FF-POP} that the sparse moment relaxation for $\omega=3$ is exact. The finite convergence of the relaxation, instead, cannot be detected using either \cite[Theorem~3.7]{Lasserre2006} (the overlap matrix $\mommat_{\Delta_{1}\cap \Delta_{2}}^{\omega}(\vec{y})$ does not have rank one) or \cite[Theorem~3.5]{Nie2024} (the moment matrices have different rank). This remains true for all $\omega \geq 3$ because the algorithm implemented in MOSEK returns moment matrices of maximum rank, so \cref{e:ex2-ranks} will not change. For this example, therefore, it is essential to use the more general conditions established in this work.

\subsubsection{Recovery of POP minimizers}
Having detected finite convergence, we recover an atomic representing measure for $\vec{y}$, whose atoms are optimizers for \cref{e:ex-pop}. Specifically, we first apply the atom extraction procedure of \cite{Henrion2005extraction} to each moment matrix $\smash{\mommat_{\Delta_i}^\omega(\vec{y})}$ to recover the `clique' atomic measures
\begin{subequations}
    \begin{align}
        \mu_{\Delta_1} &= 
        \frac14 \delta_{(1,1)} + 
        \frac14 \delta_{(1,-1)} + 
        \frac14 \delta_{(-1,-1)} + 
        \frac14 \delta_{(-1,1)},
        \\
        \mu_{\Delta_2} &= 
        \frac12 \delta_{(1,0)} +
        \frac12 \delta_{(-1,0)}
        \\
        \mu_{\Delta_3} &= 
        \frac12 \delta_{(0,1)} +
        \frac12 \delta_{(0,-1)}.
    \end{align}
\end{subequations}
We then assemble these measures as described in \cref{ss:explicit-recovery}, concatenating their atoms of in all compatible ways as illustrated in \Cref{fig:ex-pop}. The process also produces the corresponding weights in the atomic representing measure
\begin{equation}\label{e:ex-atomic-mu}
    \mu = \frac18 \sum_{i=1}^8 \delta_{\vec{x}^*_i}.
\end{equation}
Note that this measure is supported on all minimizers of the POP \cref{e:ex-pop}.

\begin{figure}
    \centering
    \includegraphics[scale=0.9]{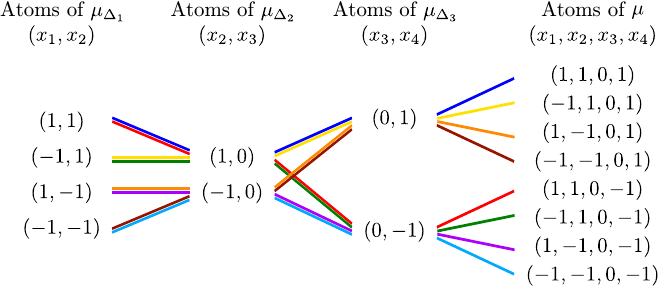}
    \caption{Atom concatenation strategy for the example of \cref{ss:ex-pop}. An atom $(a,b)$ for $\mu_{\Delta_i}$ is concatenated with an atom $(c,d)$ for $\mu_{\Delta_{i+1}}$ if $b=c$. Each colored paths leads to a different minimizer for \cref{e:ex-pop}, which is an atom of the representing measure $\mu$ in~\cref{e:ex-atomic-mu}.}
    \label{fig:ex-pop}
\end{figure}

We stress that recovering the weights of the representing  measure is not necessary if one only cares about finding a POP minimizer. For this, once the conditions of \Cref{thm:FF-POP} hold, it suffices to find any feasible concatenation of the `clique atoms' $\vec{z}^i_j$ in \cref{e:local-measure}. Indeed, by the maximality of the support proved in \Cref{prop:supp-maximality}, this vector will be an atom of the representing measure and thus a POP minimizer.

\subsubsection{Further comments}
We conclude with a remarkable observation: by exploiting correlative sparsity and using \Cref{thm:FF-POP}, we not only detect finite convergence and recover minimizers for the POP \cref{e:ex-pop}, but also do so with a relaxation order smaller than that required by a dense (i.e., single-clique) moment relaxation. Indeed, by \cite[Theorems 3.18(iii) and 3.19]{Baldi2025}, the dense relaxation of \cref{e:ex-pop} can be exact at relaxation order $\omega$ only if, for each of the minimizers $\vec{x}^*_1,\ldots,\vec{x}^*_8$, there exists an `interpolator polynomial' $p_i$ of degree $\omega-1$ that satisfies $p_i(\vec{x}^*_i)=1$ and vanishes at all other minimizers. For our example, this requires $\omega \geq 4$. 

The same reasoning, of course, can be used in general to conclude that the sparse moment relaxation cannot be exact unless the projections of the POP minimizers on each clique $\Delta_i$ have interpolator polynomials of degree $\omega-d_i$. However, this is often a much weaker requirement because the number of unique projections is never larger than the number of minimizers, and is often much smaller in practice. In our example, for instance, interpolator polynomials of the right degree for all clique projections of the minimizers $\vec{x}^*_1,\ldots,\vec{x}^*_8$ exist as soon as $\omega\geq 3$.
Based on these observations, we are led to the following conjecture.

\begin{conjecture}\label{conj:relax-order}
    If the sparse moment relaxation \cref{e:mom} exhibits finite convergence, then it does so at a relaxation order no larger than the relaxation order at which the dense relaxation is exact. 
\end{conjecture}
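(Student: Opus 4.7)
The plan is to prove the conjecture constructively: given that the dense relaxation is exact at some order $\omega_d$, I would exhibit a sparse feasible vector at the same order $\omega_d$ that satisfies the conditions of \Cref{thm:FF-POP}, thereby certifying that sparse finite convergence occurs at an order no larger than $\omega_d$. The strategy is to restrict an exact dense moment vector to sparse indices and argue that the dense flat-extension property propagates to the clique and overlap submatrices required by \Cref{thm:FF-POP}.

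First, I would invoke the classical Curto--Fialkow flat-extension criterion to recast the assumption of dense exactness at order $\omega_d$ as the existence of an optimal dense moment vector $\vec{y}^d$ satisfying $\rank \mommat^{\omega_d}(\vec{y}^d) = \rank \mommat^{\omega_d - d}(\vec{y}^d)$ with $d = \max_i d_i$, so that $\vec{y}^d$ is represented by an atomic measure $\mu^d = \sum_k \lambda_k \delta_{\vec{x}^*_k}$ supported on the minimizers of \cref{e:pop}. Next, I would define the sparse candidate $\vec{y}^s := \vec{y}^d|_{\SparseExponents{2\omega_d}}$. By construction, each clique moment/localizing matrix of $\vec{y}^s$ is a principal submatrix of the corresponding dense matrix, so sparse feasibility is inherited, and $\langle\vec{f},\vec{y}^s\rangle = \langle\vec{f},\vec{y}^d\rangle = f^*$ thanks to the correlative sparsity of $f$. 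To apply \Cref{thm:FF-POP}, it remains to verify the rank conditions in \cref{e:rank-conditions-ff}: the clique moment matrix $\mommat^{\omega_d}_{\Delta_i}(\vec{y}^s)$ represents the clique marginal $\pi_{[n]\shortto\Delta_i}\pushfwd\mu^d$, which is atomic and supported on the (typically fewer) clique projections of the POP minimizers. Since $d_i \leq d$, the flatness condition \cref{e:flat-moments} at order $\omega_d - d_i$ should be weaker than the dense flat extension at order $\omega_d - d$ and therefore inherited from it; a parallel argument on overlaps $\Delta_i \cap \Delta_j$ with $j \in [i-1]$ chosen via \cref{e:rip} should yield \cref{e:flat-overlap}.

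The hard part will be making this rank-inheritance step rigorous, because flat-extension rank equalities of a moment matrix do not in general restrict to its principal submatrices. I expect the cleanest route to be an algebraic one: the dense flat extension produces interpolator polynomials of degree $\omega_d - d$ at the minimizers $\vec{x}^*_k$, and these should be projected onto or aggregated over minimizers sharing a common clique projection to produce clique-level interpolators of degree $\omega_d - d_i$ in the variables $\vec{x}_{\Delta_i}$. Formalizing this projection---especially ensuring that the resulting clique interpolators depend only on $\vec{x}_{\Delta_i}$---is delicate, and will likely require the running intersection property to propagate consistency through the clique ordering, in the spirit of the measure-assembly procedure of \cref{ss:explicit-recovery}. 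Once this technical step is carried out, \Cref{thm:FF-POP} immediately yields sparse exactness at order $\omega_d$, and hence the conjecture.
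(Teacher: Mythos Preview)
The statement you are attempting to prove is labeled a \emph{conjecture} in the paper; the authors explicitly write that they ``leave proving or disproving \cref{conj:relax-order} to future work.'' There is therefore no proof in the paper to compare against.

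Your proposed argument contains a basic logical gap. Restricting an optimal dense moment vector $\vec{y}^d$ to the sparse index set produces a vector $\vec{y}^s$ that is \emph{feasible} for \cref{e:mom} at order $\omega_d$ with objective value $\langle \vec{f}, \vec{y}^s \rangle = f^*$, but nothing guarantees that $\vec{y}^s$ is \emph{optimal} for \cref{e:mom}. \Cref{thm:FF-POP} applies only to an optimal vector; feeding it a merely feasible $\vec{y}^s$ tells you nothing about the sparse optimal value $f^*_{\omega_d}$. Sparse exactness at order $\omega_d$ is the statement that \emph{every} sparse-feasible vector has objective value at least $f^*$; exhibiting one such vector with value exactly $f^*$ only re-establishes the trivial bound $f^*_{\omega_d} \leq f^*$, which holds at every order. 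Thus even if your rank-inheritance step went through perfectly---and you correctly flag that it need not, since flatness does not in general pass to principal submatrices---applying \Cref{thm:FF} to $\vec{y}^s$ would merely recover the atomic decomposition you started from, not rule out sparse-feasible vectors with value strictly below $f^*$. A genuine attack would have to control an \emph{arbitrary} sparse optimizer at order $\omega_d$, or work on the dual side with sparse sum-of-squares certificates; neither is addressed by your construction.

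A secondary issue: your first step conflates dense exactness ($f^*_{\omega_d,\text{dense}} = f^*$) with the existence of a dense optimizer admitting a flat extension. The latter is sufficient but not necessary for the former, so the Curto--Fialkow criterion cannot simply be ``invoked to recast'' exactness as flatness without further argument.
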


The arguments in \cite{Baldi2025}, however, are not enough to prove this conjecture: while they give conditions under which one can expect a flat extension for the clique moment matrices in \cref{e:flat-moments}, they do not account for the additional `overlap' conditions in \cref{e:flat-overlap}. We leave proving or disproving \cref{conj:relax-order} to future work.

\subsection{Necessity of the running intersection property}
\label{ss:counterexample}

Our last example shows one cannot drop \cref{e:rip} from \Cref{thm:FF}. (A different example in the context of minimizer extraction for POPs can be found in \cite[Example~3.4]{Nie2024}, which appeared after a preliminary version of the present work.)

Fix $n=3$ and consider the cliques $\Delta_{1}=\{1,2\}$, $\Delta_{2}=\{2,3\}$ and $\Delta_{3}=\{1,3\}$, which do \emph{not} satisfy \cref{e:rip}. Fix $\omega=2$ and consider a correlatively sparse vector $\vec{y} = (y_\alpha)_{\alpha \in \SparseExponents{4}} $ whose $31$ entries are
\begin{equation}
    \begin{aligned}
    &&&& y_{000} &= 1, \\
    y_{100} &= 0, & y_{010} &= 0, & y_{001} &= 0, & y_{200} &= 1, & y_{110} &= 1, \\
    y_{020} &= 1, &
    y_{101} &= -1, & y_{011} &= 1, & y_{002} &= 1, & y_{300} &= 0, \\
    y_{210} &= 0, & y_{120} &= 0, &
    y_{030} &= 0, & y_{201} &= 0, & y_{021} &= 0, \\
    y_{102} &= 0, & y_{012} &= 0, & y_{003} &= 0, &
    y_{400} &= 1, & y_{310} &= 1, \\
    y_{220} &= 1, & y_{130} &= 1, & y_{040} &= 1, & y_{301} &= -1, & y_{031} &= 1,\\
    y_{202} &= 1, & y_{022} &= 1, & y_{103} &= -1, & y_{013} &= 1, & y_{004} &= 1.
    \end{aligned}
\end{equation}
A straightforward computation confirms that
\begin{subequations}
    \label{e:ex-atoms}
    \begin{align}
    \vec{y}_{\Delta_{1}} &=
    \frac{1}{2}\dmon{(1,1)}_{4} +
    \frac{1}{2}\dmon{(-1,-1)}_{4},\\
    \vec{y}_{\Delta_{2}} &=
    \frac{1}{2}\dmon{(1,1)}_{4} +
    \frac{1}{2}\dmon{(-1,-1)}_{4},\\
    \vec{y}_{\Delta_{3}} &=
    \frac{1}{2}\dmon{(1,-1)}_{4} +
    \frac{1}{2}\dmon{(-1,1)}_{4},
    \end{align}
\end{subequations}
which immediately implies that the moment matrices $\mommat^\omega_{\Delta_i}(\vec{y})$ are positive semidefinite (we need not consider localizing matrices in this example because we chose $K=\mathbb{R}^3$).
With $\omega=2$ and $d_1=d_2=d_3=1$, one also finds that
\begin{subequations}
    \begin{gather}
    \rank\mommat_{\Delta_{i}}^{\omega}(\vec{y})=\rank\mommat_{\Delta_{i}}^{\omega-d_{i}}(\vec{y}) =2
    \quad\forall i \in [3],\\
    \label{e:ex-rank-conditions}
    \rank\mommat_{\Delta_{i} \cap \Delta_{j}}^{\omega}(\vec{y})=\rank\mommat_{\Delta_{i} \cap \Delta_{j}}^{\omega-1}(\vec{y}) =2\quad 
    \forall i,j \in [3].
    \end{gather}
\end{subequations}
(Of course, one could verify these rank conditions directly from the definition of the moment matrices and only then recover \cref{e:ex-atoms} with the atom extraction procedure of~\cite{Henrion2005extraction}.)
These rank conditions are the analogue of those in \Cref{thm:FF} when \cref{e:rip} does not hold; note how the rank conditions for the `overlap' moment matrices are satisfied by any pair of intersecting cliques.

Despite this, and despite the existence of unique representing measures for the clique subvectors $\vec{y}_{\Delta_1}$, $\vec{y}_{\Delta_1}$ and $\vec{y}_{\Delta_3}$, the vector $\vec{y}$ has no atomic representing measure. Indeed, any atom $\vec{x}=(x_1,x_2,x_3)$ in such a representing measure would have to satisfy
\begin{subequations}
    \begin{align}
    \proj{\Delta_{1}}\vec{x} = \{x_1, x_2\} & \in\{(1,1),(-1,-1)\},\\
    \proj{\Delta_{2}}\vec{x} = \{x_2, x_3\} & \in\{(1,1),(-1,-1)\},\\
    \proj{\Delta_{3}}\vec{x} = \{x_1, x_3\} & \in\{(1,-1),(-1,1)\},
    \end{align}
\end{subequations}
which is impossible.

\subsection*{Acknowledgments}
We thank Jiawang Nie for motivating us to study the correlatively sparse truncated moment problem after reading a first draft of this work. We also thank Lorenzo Baldi for pointing us to the notion of core varieties and for highlighting the relevance of interpolator polynomials.

GF was partially supported by the DFG-ANR project MONET (DFG project number 568735456).
FF was partially supported by the National Center for Artificial Intelligence CENIA FB210017, Basal ANID based in Chile, by the Fondecyt Grant N.~11261732 from ANID in Chile, and by the Office of Naval Research (ONR) award N629092312098, based in the United States.
Both authors gratefully acknowledge support from a SQuaRE at the American Institute of Mathematics. 

\bibliographystyle{abbrvnat}
\bibliography{refs}

\end{document}